\newtheorem{thm}{Theorem}[section]
\newtheorem*{thm*}{Theorem}
\newtheorem*{cor*}{Corollary}
\newtheorem*{prop*}{Proposition}
\newtheorem{cor}[thm]{Corollary}
\newtheorem{prop}[thm]{Proposition}
\newtheorem{lem}[thm]{Lemma}
\theoremstyle{definition}
\theoremstyle{remark}
\newtheorem*{idea*}{Idea}
\newcommand{\Spec}{\text{Spec }}
\let\c@equation\c@thm
\numberwithin{thm}{section}
\numberwithin{equation}{section}
\title[Moduli Problem of Hitchin Pairs over Deligne-Mumford Stack]{Moduli Problem of Hitchin Pairs over Deligne-Mumford Stack}
\author{Hao Sun}
\begin{document}
\maketitle

\renewcommand{\thefootnote}{\fnsymbol{footnote}}
\footnotetext[1]{MSC2010 Class: 14A20}
\footnotetext[2]{Key words: Deligne-Mumford stack, Hitchin pair, moduli problem}

\begin{abstract}
We define the moduli problem of Hitchin pairs over Deligne-Mumford Stack and prove this moduli problem is represented by a separated and locally finitely presented algebraic space, which is considered as the moduli space of Hitchin pairs over Deligne-Mumford stack.
\end{abstract}
\flushbottom
\section{Introduction}
The Hitchin pair was introduced by Hitchin in 1987 \cite{Hit}, which is also known as the Higgs bundle. Numerous mathematicians made great contributions to the construction of the moduli space of Hitchin pairs in the past thirty years. After Hitchin's paper, Niture constructed the moduli space of semistable Hitchin pairs over smooth curves \cite{Nit}, and Simpson constructed the moduli space of semistable Hitchin pairs over smooth projective varieties \cite{Simp2}. Combined with the study of parabolic bundle \cite{MeSe}, people constructed the moduli space of parabolic Higgs bundle \cite{Yoko1}. It is well-known that the construction of the parabolic bundle comes from the bundle over orbifold. If we consider the orbifold as stacks (or as a special type of Deligne-Mumford stack), a natural idea is how to construct the moduli space of Hitchin pairs over Deligne-Mumford stack. In this paper, we partially answer this question by proving the existence of the moduli space of Hitchin pairs over a Deligne-Mumford stack.

Let $\mathcal{X} \rightarrow S$ be a separated, locally finitely-presented morphism from a Deligne-Mumford stack $\mathcal{X}$ to an algebraic space $S$. Let $\mathcal{G}$ be a coherent $\mathcal{O}_{\mathcal{X}}$-module. Olsson and Starr defined the quotient functor $Q(\mathcal{G}/\mathcal{X}/S):=Quot(\mathcal{G},\mathcal{X},S)$ and proved that the quotient functor is represented by an algebraic space which is separated and locally finitely presented over $S$ \cite{OlSt}.

Let $\mathcal{F}$ be a locally finitely-presented quasi-coherent $\mathcal{O}_{\mathcal{X}}$-module in $Q(\mathcal{G}/\mathcal{X}/S)$, and we fix a line bundle (locally free sheaf with rank one) $\mathcal{L}$ over $\mathcal{X}$, which is considered as the twisted bundle. An \emph{$\mathcal{L}$-twisted Higgs field} $\Phi$ on the quasi-coherent sheaf $\mathcal{F}$ is a homomorphism
\begin{align*}
\Phi:\mathcal{F} \rightarrow \mathcal{F} \otimes \mathcal{L}.
\end{align*}
An \emph{$\mathcal{L}$-twisted Hitchin pair} over $\mathcal{X}$ is a pair $(\mathcal{F},\Phi)$, where $\mathcal{F}$ is a locally finitely-presented quasi-coherent sheaf over $\mathcal{X}$ and $\Phi$ is an $\mathcal{L}$-twisted Higgs field. We consider the following moduli problem (functor)
\begin{align*}
\mathcal{M}_{\mathcal{X},\mathcal{L},Higgs}: (\text{Sch/S})^{{\rm op}} \rightarrow \text{Set}
\end{align*}
for $\mathcal{L}$-twisted Hitchin pairs over $\mathcal{X}$ as follows. For each $T \in \text{Sch/S}$, define
\begin{align*}
\mathcal{M}_{\mathcal{X},\mathcal{L},Higgs}(T)=\{ (\mathcal{F}_T,\Phi_T) \text{ } | \text{ }\mathcal{F}_T \in Q(\mathcal{G}/\mathcal{X}/S)(T), \Phi_T: \mathcal{F}_T \rightarrow \mathcal{F}_T \otimes p_{\mathcal{X}}^* \mathcal{L} \} ,
\end{align*}
where $p_{\mathcal{X}}:\mathcal{X} \times_S T \rightarrow \mathcal{X}$ is the natural projection. The main result of this paper is the following theorem.
\begin{thm*}{\rm\textbf{\ref{202}}}
Let $\mathcal{X} \rightarrow S$ be a separated, locally finitely-presented morphism from a Deligne-Mumford stack $\mathcal{X}$ to an algebraic space $S$. Fix a line bundle $\mathcal{L}$ on $\mathcal{X}$. The moduli problem $\mathcal{M}_{\mathcal{X},\mathcal{L},Higgs}$ is represented by an algebraic space which is separated and locally finitely presented.
\end{thm*}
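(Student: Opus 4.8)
The plan is to exhibit a forgetful morphism from $\mathcal{M}_{\mathcal{X},\mathcal{L},Higgs}$ to the quotient space $Q(\mathcal{G}/\mathcal{X}/S)$ and to prove that this morphism is representable by separated, locally finitely presented algebraic spaces; since $Q(\mathcal{G}/\mathcal{X}/S)$ is itself such an algebraic space by the theorem of Olsson and Starr, the representability and the geometric properties of $\mathcal{M}_{\mathcal{X},\mathcal{L},Higgs}$ will follow by composition. Forgetting the Higgs field defines a natural transformation $\pi \colon \mathcal{M}_{\mathcal{X},\mathcal{L},Higgs} \to Q(\mathcal{G}/\mathcal{X}/S)$, $(\mathcal{F}_T,\Phi_T) \mapsto \mathcal{F}_T$. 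Writing $Q := Q(\mathcal{G}/\mathcal{X}/S)$ and $\mathcal{X}_Q := \mathcal{X} \times_S Q$, and letting $\mathcal{F}^{\mathrm{univ}}$ denote the universal quotient sheaf on $\mathcal{X}_Q$, an object of $Q(T)$ is the same datum as a morphism $g \colon T \to Q$, under which $\mathcal{F}_T = g^{\ast}\mathcal{F}^{\mathrm{univ}}$. Hence, as a functor over $Q$, $\mathcal{M}_{\mathcal{X},\mathcal{L},Higgs}$ is identified with the relative homomorphism functor
\begin{align*}
\mathcal{H} \colon (T \to Q) \longmapsto \mathrm{Hom}_{\mathcal{X}_T}\bigl(\mathcal{F}^{\mathrm{univ}}_T,\ \mathcal{F}^{\mathrm{univ}}_T \otimes p_{\mathcal{X}}^{\ast}\mathcal{L}\bigr),
\end{align*}
and it suffices to represent $\mathcal{H}$ by a separated, locally finitely presented algebraic space over $Q$.

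To represent $\mathcal{H}$ I would reduce it to a quotient functor and invoke Olsson--Starr a second time, now over the base $Q$. Set $\mathcal{H}^{\mathrm{univ}} := \mathcal{F}^{\mathrm{univ}} \otimes p_{\mathcal{X}}^{\ast}\mathcal{L}$ and consider the coherent sheaf $\mathcal{G}' := \mathcal{F}^{\mathrm{univ}} \oplus \mathcal{H}^{\mathrm{univ}}$ on $\mathcal{X}_Q$. Since $\mathcal{X}_Q \to Q$ is again separated and locally finitely presented (being a base change of $\mathcal{X} \to S$) and $\mathcal{G}'$ is coherent, the space $Q' := Quot(\mathcal{G}', \mathcal{X}_Q, Q)$ is a separated, locally finitely presented algebraic space over $Q$. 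The classical graph construction then identifies $\mathcal{H}$ with a subfunctor of $Q'$: a homomorphism $\Phi \colon \mathcal{F}^{\mathrm{univ}}_T \to \mathcal{H}^{\mathrm{univ}}_T$ produces the quotient $q_\Phi \colon \mathcal{F}^{\mathrm{univ}}_T \oplus \mathcal{H}^{\mathrm{univ}}_T \to \mathcal{H}^{\mathrm{univ}}_T$, $(f,h) \mapsto h - \Phi(f)$, whose kernel is the graph of $\Phi$; conversely, a quotient $q \colon \mathcal{G}'_T \to \mathcal{Q}$ arises this way precisely when the composite $u_q \colon \mathcal{H}^{\mathrm{univ}}_T \hookrightarrow \mathcal{G}'_T \xrightarrow{q} \mathcal{Q}$ is an isomorphism, in which case $\Phi = -\,u_q^{-1} \circ q \circ \iota$ with $\iota \colon \mathcal{F}^{\mathrm{univ}}_T \hookrightarrow \mathcal{G}'_T$. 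Thus $\mathcal{H}$ is the subfunctor of $Q'$ cut out by the condition that $u_q$ be an isomorphism.

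The main point, which I expect to be the principal obstacle, is to check that this condition is open, so that $\mathcal{H}$ is represented by an open subspace of $Q'$. Here $u_q$ is a morphism of finitely presented sheaves that are flat over the base and have proper support over it, and for such a morphism the locus over which it is an isomorphism is open. I would verify this using properness of the support over $Q'$: the non-surjectivity locus on $\mathcal{X}_{Q'}$ is closed and, by Nakayama, has closed image in $Q'$, so the surjective locus in $Q'$ is open; over that locus the local flatness criterion forces the kernel to be flat, whence the further locus where $u_q$ is injective, equivalently where the (flat, proper-support) kernel vanishes, is again open. This can be checked étale-locally on $Q'$ and on a smooth atlas of $\mathcal{X}$, reducing it to the standard scheme-theoretic statement. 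Granting this, $\mathcal{H}$ is an open subspace $U \hookrightarrow Q'$, hence a separated, locally finitely presented algebraic space over $Q$, since open immersions are separated and locally finitely presented and these properties are stable under composition.

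Combining the identification $\mathcal{M}_{\mathcal{X},\mathcal{L},Higgs} \cong U$ over $Q$ with the fact that $Q \to S$ is a separated, locally finitely presented algebraic space, and using that a sheaf admitting a representable morphism to an algebraic space is itself an algebraic space, we conclude that $\mathcal{M}_{\mathcal{X},\mathcal{L},Higgs}$ is a separated, locally finitely presented algebraic space over $S$. Besides the openness of the isomorphism locus, the two points requiring care are the coherence of the universal quotient $\mathcal{F}^{\mathrm{univ}}$, needed so that Olsson--Starr applies to $\mathcal{G}'$, and the faithful tracking of the flatness and proper-support hypotheses through the graph construction, since these are exactly the conditions that secure both the openness above and the separatedness inherited from $Q'$.
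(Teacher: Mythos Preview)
Your proof is correct and follows a genuinely different route from the paper. The paper verifies Artin's representability criteria for $\mathcal{M}_{\mathcal{X},\mathcal{L},Higgs}$ directly: it checks the sheaf, inverse-limit, and separation conditions by reducing to the corresponding properties of $Q(\mathcal{G}/\mathcal{X}/S)$, and then builds an explicit deformation--obstruction theory via the two-term complex $C^\bullet\colon \mathrm{End}(\mathcal{F})\otimes M \xrightarrow{e(\Phi)} \mathrm{End}(\mathcal{F})\otimes\mathcal{L}\otimes M$, proving that $\mathcal{M}_\xi(A_0[M])\cong\mathbb{H}^1(C^\bullet)$ and that obstructions live in $\mathbb{H}^2(C^\bullet)$. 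You instead factor the problem as a relative $\mathrm{Hom}$-functor over $Q$ and represent it by the graph construction inside a second Quot space, invoking Olsson--Starr twice and checking only the openness of the isomorphism locus. Your argument is shorter and more structural, and it makes transparent that $\mathcal{M}_{\mathcal{X},\mathcal{L},Higgs}\to Q$ is affine (indeed a linear scheme), a fact not visible in the paper's approach. What the paper's method buys, and yours does not, is the explicit identification of tangent and obstruction spaces with hypercohomology groups of $C^\bullet$; this is additional infinitesimal information about the moduli space that the paper records as Proposition~\ref{302} and Corollary~\ref{303}. The two points you flag as requiring care---coherence of $\mathcal{F}^{\mathrm{univ}}$ so that Olsson--Starr applies over $Q$, and the openness argument for the locus where $u_q$ is an isomorphism---are genuine but routine under the paper's standing hypotheses (everything is locally noetherian once one works \'etale-locally on $S$), so there is no gap.
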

To prove the main result, we use a theorem by Artin in 1968 \cite[Theorem 5.3]{Art} (see Theorem \ref{301} in \S 3). In fact, Olsson and Starr also used this theorem to prove that the quotient functor is representable by an algebraic space \cite{OlSt}. Artin's theorem is the key point to prove Theorem \ref{202}.

In \S 2, we review some basic definitions and properties about Deligne-Mumford stack, define the moduli problem for Hitchin pairs and state our main result Theorem \ref{202}. \S 3 is devoted to the proof of Theorem \ref{202}. In \S 3.1, we review Artin's theorem \cite[Theorem 5.3]{Art} and give a brief overview of the proof. In \S 3.2 and \S 3.3, we prove the main theorem in this paper. Moreover, we construct a well-defined deformation and construction for the moduli problem $\mathcal{M}_{\mathcal{X},\mathcal{L},Higgs}$, which is a generalization of the infinitesimal deformation of Higgs bundle \cite{BisRam}.

\vspace{2mm}
\textbf{Acknowledgments}.
The author would like to thank Georgios Kydonakis and Lutian Zhao for helpful discussions.
\vspace{2mm}

\section{Definition and Statement of the Main Result}
\subsection{Definition}
In this section, we review the definition of Deligne-Mumford stack and coherent sheaves over an algebraic stack. Details can be found in \cite{Ol}. Let $S_0$ be a scheme, which is the spectrum of an algebraically closed field. Let $\text{Sch}/S_0$ be the category of schemes over $S_0$. Let $f: E \rightarrow F$ be a morphism of sheaves on $\text{Sch}/S_0$ with respect to the \'etale topology. The morphism $f$ is \emph{representable} by schemes if for every $S_0$-scheme $T$ and morphism $T \rightarrow F$, the fiber product $E \times_F T$ is a scheme. An \emph{algebraic space} $\mathcal{X}$ over $S_0$ is a functor $\mathcal{X}:(\text{Sch}/S_0)^{{\rm op}} \rightarrow \text{Set}$ such that
\begin{enumerate}
\item $\mathcal{X}$ is a sheaf with repsect to the \'etale topology.
\item $\Delta: \mathcal{X} \rightarrow \mathcal{X} \times_{S_0} \mathcal{X}$ is representable by schemes.
\item There exists an $S_0$-scheme $U$ and a surjective \'etale morphism $U \rightarrow \mathcal{X}$.
\end{enumerate} .

Sheaves on an algebraic space $\mathcal{X}$ is defined locally with respect to the \'etale topology of $\text{Sch}/S_0$. Consider a local chart $U \rightarrow \mathcal{X}$, an \'etale morphism with $U$ a scheme. Let $R_U=U \times_{\mathcal{X}} U$, which is a scheme by the representability of the diagonal morphism. Denote by $s,t: R_U \rightarrow U$ the source and target map. Let $(R_U \rightrightarrows U)_{\text{\'et}}$ be the set of pairs $(F_U, \epsilon_U)$, where $F_U$ is a sheaf on $U$ and
\begin{align*}
\epsilon_U: s^* F_U \rightarrow t^* F_U
\end{align*}
is an isomorphism of sheaves on $R_U$. Now let $U,V$ be two schemes, and let
\begin{align*}
f_U: U \rightarrow \mathcal{X}, \quad f_V : V \rightarrow \mathcal{X}
\end{align*}
be two \'etale morphisms. Let $h:U \rightarrow V$ be a morphism of schemes such that the following diagram commutes
\begin{center}
\begin{tikzcd}
U \arrow[rd, "f_U"] \arrow[rr, "h"] &  & V \arrow[ld,"f_V"] \\
& \mathcal{X} &
\end{tikzcd}
\end{center}
A \emph{sheaf} $\mathcal{F}$ on $\mathcal{X}$ consists of a collection of pairs $\{(F_U,\epsilon_U)\}$ for each \'etale morphism $f_U: U \rightarrow \mathcal{X}$ along with isomorphisms $a_{h}:F_U \rightarrow h^* F_V$, where $F_U$, $F_V$ are sheaves over $U$, $V$ respectively.

A sheaf $F$ is \emph{coherent} (resp. \emph{quasi-coherent}, \emph{locally free}) \emph{sheaf} if the local data $F_U$ is coherent (resp. quasi-coherent, locally free) for each \'etale morphism $U \rightarrow \mathcal{X}$. A sheaf $F=\{(F_U,\epsilon_U)\}$ is an \emph{$\mathcal{O}_{\mathcal{X}}$-module}, if $F_U$ is an $\mathcal{O}_U$-module for each \'etale morphism $U \rightarrow \mathcal{X}$.

It is easy to check that the category of sheaves over an algebraic space $\mathcal{X}$ is equivalent to the category of pairs $(F_U, \epsilon_U)$, where $U$ is a scheme and $U \rightarrow \mathcal{X}$ is a surjective \'etale morphism. Such a surjective \'etale morphism always exists by the definition of algebraic space. Under this definition, the category of sheaves over $\mathcal{X}$ is independent of the choice of the surjective \'etale morphism $U \rightarrow \mathcal{X}$ (see \cite[\S 7]{Ol}).

A \emph{stack} $\mathcal{X}: (\text{Sch}/S_0)^{{\rm op}} \rightarrow \text{Set}$ is a functor with groupoid as fiber (fibered category) satisfying the descent condition (\cite[\S 4.6]{Ol}). A morphism of stacks $f: \mathcal{X} \rightarrow \mathfrak{Y}$ is \emph{representable} if for every $S$-scheme $T$ and morphism $T \rightarrow \mathfrak{Y}$, the fiber product
\begin{align*}
\mathcal{X} \times_{\mathfrak{Y}} T
\end{align*}
is an algebraic space. An \emph{algebraic stack} $\mathcal{X}$ over scheme $S_0$ is a stack satisfying the following conditions:
\begin{enumerate}
\item The diagonal $$\Delta: \mathcal{X} \rightarrow \mathcal{X} \times_{S_0} \mathcal{X}$$ is representable.
\item There exists a smooth surjective morphism $U \rightarrow \mathcal{X}$ with $U$ a scheme.
\end{enumerate}
Moreover, if there exists a \'etale surjection $U \rightarrow \mathcal{X}$, the algebraic stack $\mathcal{X}$ is a \emph{Deligne-Mumford stack}.

\subsection{Moduli Space of Twisted Hitchin Pairs over Deligne-Mumform Stack}
In this section, we give the definition of the moduli problem of twisted Hitchin pairs over Deligne-Mumford stack and state the main result, Theorem \ref{202} of this paper.

Let $S$ be an algebraic space, which is locally of finite type over $S_0$, where $S_0$ is the spectrum of an algebraically closed field as we defined in \S 2.1. Locally, the algebraic space $S$ is a scheme of finite type over $\mathbb{Z}$ with respect to the \'etale topology. Denote by $(\text{Sch}/S)_{\text{\'et}}$ the category of $S$-schemes with respect to the \'etale topology. In other words, the objects in $(\text{Sch}/S)_{\text{\'et}}$ are \'etale morphism from schemes to $S$. In the rest of the paper, we use the notation $\text{Sch}/S$ instead of $(\text{Sch}/S)_{\text{\'et}}$. Let $\mathcal{X} \rightarrow S$ be a separated, locally finitely-presented morphism from a Deligne-Mumford stack $\mathcal{X}$ to $S$. Let $\mathcal{G}$ be a coherent $\mathcal{O}_{\mathcal{X}}$-module.
We define the functor
\begin{align*}
Q(\mathcal{G}/\mathcal{X}/S):=Quot(\mathcal{G},\mathcal{X},S): (\text{Sch}/S)^{{\rm op}} \rightarrow \text{Set}
\end{align*}
as follows. For each $S$-scheme $T$, define $\mathcal{X}_T$ as $\mathcal{X} \times_{S} T$ and $\mathcal{G}_T$ the pullback of $\mathcal{G}$ to $\mathcal{X}_T$. Define $Q(\mathcal{G}/\mathcal{X}/S)(T)$ to be the set of $\mathcal{O}_{\mathcal{X}_T}$-module quotients $\mathcal{G}_T \rightarrow \mathcal{F}_T$ such that
\begin{enumerate}
\item $\mathcal{F}_T$ is a locally finitely-presneted quasi-coherent $\mathcal{O}_{\mathcal{X}_T}$-module;
\item $\mathcal{F}_T$ is flat over $T$;
\item the support of $\mathcal{F}_T$ is proper over $T$.
\end{enumerate}
The functor $Q(\mathcal{G}/\mathcal{X}/S)$ is called the \emph{quotient functor}.

Artin proved that the quotient functor $Q(\mathcal{G}/\mathcal{X}/S)$ is represented by a separated and locally finitely-presented algebraic space over $S$ when $\mathcal{X}$ is an algebraic space \cite{Art}. Olsson and Starr generalized this result to Deligne-Mumford stack  \cite{OlSt}.

\begin{thm}[Theorem 1.1 in \cite{OlSt}]\label{201}
With respect to the above notation, the functor $Q(\mathcal{G}/\mathcal{X}/S)=Quot(\mathcal{G},\mathcal{X},S)$ is represented by an algebraic space which is separated and locally finitely presented over $S$.
\end{thm}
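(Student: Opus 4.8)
The plan is to verify the hypotheses of Artin's representability criterion \cite[Theorem 5.3]{Art} (stated as Theorem \ref{301}) for the functor $Q(\mathcal{G}/\mathcal{X}/S)$, and then to establish separatedness by the valuative criterion. Since being representable by an algebraic space, being locally of finite presentation, and being separated are all local on $S$ for the \'etale topology, I would first reduce to the case where $S$ is an affine Noetherian scheme of finite type over $S_0$. This uses that $S$ is locally of finite type over $S_0$ and that $\mathcal{X} \to S$ is locally finitely presented, together with standard limit arguments to pass from the limit-preserving functor to a Noetherian base on which Artin's criterion applies.

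The first two conditions are formal. The functor is a sheaf for the \'etale topology: a quotient of $\mathcal{G}_T$ is a piece of descent datum, and by the description of (quasi-)coherent sheaves on a Deligne-Mumford stack via \'etale charts $U \to \mathcal{X}$ recalled in \S 2, quotients glue along \'etale covers of $T$, while flatness over $T$ and properness of support are \'etale-local and hence preserved. The functor is also limit-preserving (locally of finite presentation as a functor): given a filtered system $A = \varinjlim A_i$, any finitely presented quotient over $A$ descends to some $A_i$, and compatibility of flatness and of properness of support under filtered limits follows from the finite-presentation hypotheses on $\mathcal{X} \to S$ and $\mathcal{G}$, via the usual EGA IV, \S 8 spreading-out results transported to the stack through a chart.

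The deformation--obstruction theory is the next input. Fix a point $q : \mathcal{G}_T \twoheadrightarrow \mathcal{F}_T$ with kernel $\mathcal{K}_T$. For a square-zero extension $T \hookrightarrow T'$ with ideal $J$, the set of extensions of $q$ to a flat quotient over $T'$ is, when nonempty, a torsor under $\operatorname{Hom}_{\mathcal{O}_{\mathcal{X}_T}}(\mathcal{K}_T, \mathcal{F}_T \otimes J)$, and the obstruction to existence lies in $\operatorname{Ext}^1_{\mathcal{O}_{\mathcal{X}_T}}(\mathcal{K}_T, \mathcal{F}_T \otimes J)$. Because the support of $\mathcal{F}_T$ is proper over $T$, these groups are coherent $\mathcal{O}_T$-modules depending functorially on $J$, which rests on the finiteness (coherence) of higher direct images for proper morphisms of Deligne-Mumford stacks; they are automatically finite-dimensional over residue fields. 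Together with the Schlessinger--Rim homogeneity conditions, which hold because quotients glue over fiber products of Artinian rings, this furnishes precisely the tangent space, deformation functor, and obstruction theory that Artin's criterion demands.

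The main obstacle is the effectivity of formal deformations. Given a complete Noetherian local $\mathcal{O}_S$-algebra $(A,\mathfrak{m})$ and a compatible system of quotients $\{q_n : \mathcal{G}_{A/\mathfrak{m}^{n+1}} \twoheadrightarrow \mathcal{F}_n\}$, one must produce an actual quotient over $\operatorname{Spec} A$ restricting to the $q_n$. This is exactly the Grothendieck existence theorem (formal GAGA) for coherent sheaves on the proper Deligne-Mumford stack supporting the $\mathcal{F}_n$: one applies it to the kernels $\mathcal{K}_n \subseteq \mathcal{G}$ to algebraize the formal subsheaf and then checks that the resulting quotient is flat with proper support. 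Proving this existence statement in the stacky setting is where the genuine work lies, and it is typically reduced to the case of a proper scheme, either by passing to a smooth or \'etale cover and using descent, or by comparing $\mathcal{X}$ with its coarse moduli space. Finally, separatedness follows from the valuative criterion: over a discrete valuation ring, two flat quotients of $\mathcal{G}$ with proper support agreeing over the generic point have saturated kernels that coincide generically, hence coincide, so the two points are equal. With all of Artin's conditions verified and separatedness established, $Q(\mathcal{G}/\mathcal{X}/S)$ is represented by a separated, locally finitely presented algebraic space over $S$.
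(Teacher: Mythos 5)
Your proposal is correct and takes essentially the same approach as the source: the paper offers no proof of Theorem \ref{201} itself, importing it from \cite{OlSt}, and, as noted in \S 3.1, Olsson and Starr prove it precisely by verifying the conditions of Artin's criterion (Theorem \ref{301}), with the deformation theory of a quotient governed by $\mathrm{Hom}(\mathcal{K}_T,\mathcal{F}_T\otimes J)$ and obstructions in $\mathrm{Ext}^1(\mathcal{K}_T,\mathcal{F}_T\otimes J)$, effectivity supplied by a Grothendieck existence theorem for Deligne--Mumford stacks, and separation by the valuative saturation argument, all of which your sketch identifies. In particular, you correctly single out the stacky Grothendieck existence theorem as the place where the genuinely new work in \cite{OlSt} lies.
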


Let $\mathcal{F}$ be a locally finitely-presented quasi-coherent $\mathcal{O}_{\mathcal{X}}$-module in $Q(\mathcal{G}/\mathcal{X}/S)$, and we fix a line bundle (locally free sheaf with rank one) $\mathcal{L}$ over $\mathcal{X}$, which is considered as the twisted bundle. An \emph{$\mathcal{L}$-twisted Higgs field} $\Phi$ on the quasi-coherent sheaf $\mathcal{F}$ is a homomorphism
\begin{align*}
\Phi:\mathcal{F} \rightarrow \mathcal{F} \otimes \mathcal{L}.
\end{align*}
An \emph{$\mathcal{L}$-twisted Hitchin pair} over $\mathcal{X}$ is a pair $(\mathcal{F},\Phi)$, where $\mathcal{F}$ is a locally finitely-presented quasi-coherent sheaf over $\mathcal{X}$ and $\Phi$ is an $\mathcal{L}$-twisted Higgs field. We consider the following moduli problem (functor)
\begin{align*}
\mathcal{M}_{\mathcal{X},\mathcal{L},Higgs}: (\text{Sch/S})^{{\rm op}} \rightarrow \text{Set}
\end{align*}
for $\mathcal{L}$-twisted Hitchin pairs over $\mathcal{X}$ as follows. For each $T \in \text{Sch/S}$, define
\begin{align*}
\mathcal{M}_{\mathcal{X},\mathcal{L},Higgs}(T)=\{ (\mathcal{F}_T,\Phi_T) \text{ } | \text{ }\mathcal{F}_T \in Q(\mathcal{G}/\mathcal{X}/S)(T), \Phi_T: \mathcal{F}_T \rightarrow \mathcal{F}_T \otimes p_{\mathcal{X}}^* \mathcal{L} \} ,
\end{align*}
where $p_{\mathcal{X}}:\mathcal{X} \times_S T \rightarrow \mathcal{X}$ is the natural projection.

Note that the definition of the moduli problem $\mathcal{M}_{\mathcal{X},\mathcal{L},Higgs}$ depends on the choice of the quotient functor $Q(\mathcal{G}/\mathcal{X}/S)$. But, this definition can be extended to the category, more precisely the stack, of coherent sheaves $\mathfrak{Coh}_{\mathcal{X}}$ over $\mathcal{X}$. Nironi proved that a certain family of quotient functors form a smooth open atlas of the coherent sheaves $\mathfrak{Coh}_{\mathcal{X}}$ \cite[\S 2]{Nir}. Thus the moduli problem $\mathcal{M}_{\mathcal{X},\mathcal{L},Higgs}$ can be naturally extended to the stack of coherent sheaves $\mathfrak{Coh}_{\mathcal{X}}$ over $\mathcal{X}$. Based on this property, we neglect the quotient functor in the definition of $\mathcal{M}_{\mathcal{X},\mathcal{L},Higgs}$. Now we are ready to state the main theorem in this paper.
\begin{thm}\label{202}
Let $\mathcal{X} \rightarrow S$ be a separated, locally finitely-presented morphism from a Deligne-Mumford stack $\mathcal{X}$ to an algebraic space $S$. Fix a line bundle $\mathcal{L}$ on $\mathcal{X}$. The moduli problem $\mathcal{M}_{\mathcal{X},\mathcal{L},Higgs}$ is represented by an algebraic space which is separated and locally finitely presented.
\end{thm}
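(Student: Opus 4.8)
The plan is to apply Artin's representability criterion (Theorem \ref{301}) directly to the functor $\mathcal{M}_{\mathcal{X},\mathcal{L},Higgs}$, following the strategy that Olsson and Starr used for the quotient functor $Q(\mathcal{G}/\mathcal{X}/S)$. The organizing observation is that there is a forgetful morphism
\[
\pi: \mathcal{M}_{\mathcal{X},\mathcal{L},Higgs} \longrightarrow Q(\mathcal{G}/\mathcal{X}/S), \qquad (\mathcal{F}_T,\Phi_T)\mapsto \mathcal{F}_T,
\]
whose fibre over a quotient $\mathcal{F}_T$ is the \emph{linear} set $\mathrm{Hom}_{\mathcal{O}_{\mathcal{X}_T}}(\mathcal{F}_T,\mathcal{F}_T\otimes p_{\mathcal{X}}^*\mathcal{L})$. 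Since Theorem \ref{201} already provides that the base $Q(\mathcal{G}/\mathcal{X}/S)$ is a separated, locally finitely presented algebraic space, the Higgs field contributes only an additional linear (affine) datum on top of the quotient, and this linear structure is what makes the hypotheses of Artin's theorem verifiable.

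First I would check the two ``soft'' hypotheses. The sheaf condition for the \'etale topology follows from \'etale descent for quasi-coherent sheaves on $\mathcal{X}_T$ together with descent for the homomorphism $\Phi_T$; since $Q(\mathcal{G}/\mathcal{X}/S)$ is already an \'etale sheaf and $\mathrm{Hom}$ of finitely presented sheaves glues, $\mathcal{M}_{\mathcal{X},\mathcal{L},Higgs}$ is an \'etale sheaf. Local finite presentation — that the functor commutes with filtered colimits of $S$-algebras — follows because $Q(\mathcal{G}/\mathcal{X}/S)$ is locally of finite presentation, and because a homomorphism between finitely presented sheaves is itself a finite-presentation datum, hence detected at a finite stage of any filtered system.

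The substantive work is the deformation and obstruction theory, where I expect to spend most of the effort. Given a small extension of Artinian local rings over $S_0$ and a pair $(\mathcal{F},\Phi)$ over the quotient, I would control deformations by the hypercohomology of the two-term complex
\[
C^\bullet:\quad \mathcal{E}nd(\mathcal{F}) \xrightarrow{\ [\Phi,\,-\,]\ } \mathcal{E}nd(\mathcal{F})\otimes\mathcal{L}
\]
on $\mathcal{X}$, so that first-order deformations are governed by $\mathbb{H}^1(C^\bullet)$ and obstructions lie in $\mathbb{H}^2(C^\bullet)$; this is the generalization to $\mathcal{X}$ of the Biswas--Ramanan deformation theory \cite{BisRam}, with the first term tracking the deformations of $\mathcal{F}$ (compatibly with the quotient presentation) and the differential encoding compatibility with $\Phi$. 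Because $\mathcal{F}$ is only a locally finitely presented quasi-coherent sheaf with proper support, rather than a vector bundle, these groups must be constructed \'etale-locally on a presentation $U\to\mathcal{X}$ and descended, using that the coherent cohomology of the proper support is finite and satisfies cohomology-and-base-change. The Schlessinger-type conditions and the finite generation of the deformation and obstruction modules then reduce to this coherence statement.

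The main obstacle, as I see it, is precisely the verification that the obstruction theory is \emph{coherent and compatible with base change} over the Deligne--Mumford stack: one must show that the relative $\mathrm{Hom}$ and $\mathrm{Ext}$ sheaves attached to the universal quotient $\mathcal{F}^{\mathrm{univ}}$ and $\mathcal{F}^{\mathrm{univ}}\otimes\mathcal{L}$ are represented by coherent modules commuting with arbitrary base change, using only the properness of the \emph{support} (not of $\mathcal{X}\to S$) together with the \'etale-local nature of sheaves on $\mathcal{X}$. Once this is in place, effectivity of formal deformations follows from the Grothendieck existence theorem for coherent sheaves on the proper support over the stack, and Artin's criterion yields that $\mathcal{M}_{\mathcal{X},\mathcal{L},Higgs}$ is an algebraic space, locally of finite presentation. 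Finally, separatedness is inherited from $Q(\mathcal{G}/\mathcal{X}/S)$ together with the fact that $\pi$ is representable by a separated morphism, its fibres being linear $\mathrm{Hom}$-spaces, and hence affine; equivalently one checks the valuative criterion directly, the uniqueness of the extension of $\Phi$ across the generic point of a valuation ring being immediate from the uniqueness of the underlying quotient.
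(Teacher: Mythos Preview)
Your proposal is correct and takes essentially the same approach as the paper: both apply Artin's criterion (Theorem \ref{301}) directly to $\mathcal{M}_{\mathcal{X},\mathcal{L},Higgs}$, identify the deformation--obstruction theory with $\mathbb{H}^1$ and $\mathbb{H}^2$ of the Biswas--Ramanan two-term complex $\mathrm{End}(\mathcal{F})\to\mathrm{End}(\mathcal{F})\otimes\mathcal{L}$, and leverage the representability of $Q(\mathcal{G}/\mathcal{X}/S)$ for the underlying sheaf as well as for the separation and inverse-limit conditions. Your framing via the affine forgetful morphism $\pi$ and Grothendieck existence is a mild repackaging, but the substance matches the paper's step-by-step verification of conditions (1)--(5), including the base-change compatibility handled there via Lemma \ref{305}.
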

This theorem will be proved in the next section.


\section{Proof of Theorem \ref{202}}
\subsection{Background and Overview of the Proof}
We first review the \emph{deformation theory} defined in \cite{Art} and give the statement of Theorem 5.3 in \cite{Art}, which is the key to prove Theorem \ref{202}. Then, we go over some basic ideas of the proof of Theorem \ref{201} that the quotient functor $Q(\mathcal{G}/\mathcal{X}/S)$ is represented by a separated and locally finitely presented algebraic space \cite[Theorem 1.1]{OlSt}. Finally, we give a brief overview of the proof of Theorem \ref{202} at the end of this subsection.

Let $S$ be an algebraic space. An \emph{infinitesimal extension} of an $\mathcal{O}_{S}$-algebra $A$ is a surjective map of $\mathcal{O}_{S}$-algebras $A' \twoheadrightarrow A$ such that the kernel $M=\text{ker}(A' \rightarrow A)$ is a finitely generated nilpotent ideal.

Let $F$ be a contravariant functor from $\text{Sch}/S$ to sets (moduli problem). Let $A_0$ be a noetherian $\mathcal{O}_{S}$-domain. We prefer to use the notation $F(A_0)$ instead of $F(\Spec A_0)$. The \emph{deformation situation} is defined as a triple
\begin{align*}
(A' \rightarrow A \rightarrow A_0, M, \xi)
\end{align*}
where $A' \rightarrow A \rightarrow A_0$ is a diagram of infinitesimal extension, $M=\text{ker}(A' \rightarrow A)$ a finite $A_0$-module and $\xi \in F(A_0)$. Let $\xi$ be an element in $F(A_0)$. As a contravariant functor (for $S$-schemes), we have a natural map $F(A) \rightarrow F(A_0)$. Denote by $F_{\xi}(A)$ the set of elements in $F(A)$ whose image is $\xi \in F(A_0)$.

The \emph{deformation theory} we consider in this paper is described in \cite[Definition 5.2]{Art}. A \emph{deformation theory} for $F$ consists of the following data and conditions
\begin{enumerate}
\item A functor associates to every triple $(A_0,M,\xi)$ an $A_0$-module $D=D(A_0,M,\xi)$, and to every map of triples $(A_0,M,\xi) \rightarrow (B_0,N,\eta)$ an linear map $D(A_0,M,\xi)\rightarrow D(B_0,N,\eta)$.
\item For every deformation situation, there is an operation of the additive group of $D(A_0,M,\xi)$ on $F_{\xi}(A')$ such that two elements are in the same orbit under the operation if and only if they have the same image in $F_{\xi}(A)$, where $F_{\xi}(A')$ is the subset of $F(A')$ of elements whose image in $F(A_0)$ is $\xi$.
\end{enumerate}

\begin{thm}[Theorem 5.3 in \cite{Art}]\label{301}
Let $F$ be a functor on $({\rm Sch/S})^{\rm op}$. Given a deformation theory for $F$, then $F$ is represented by a separated and locally of finite type algebraic space over $S$, if the following conditions hold:
\begin{enumerate}
\item[{\rm (1)}] $F$ is a sheaf for the $fppf$-topology and $F$ is locally of finite presentatiion.
\item[{\rm (2)}]{\rm (Inverse Limits)} Let $\bar{A}$ be a complete noetherian local $\mathcal{O}_S$-algebra with residue field of finite type over $S$ and let $\mathfrak{m}$ be the maximal idea of $\bar{A}$. Then the canonical map $F(\bar{A}) \rightarrow \lim\limits_{\leftarrow} F(\bar{A}/\mathfrak{m}^n)$ is injective, and its image is dense in $\lim\limits_{\leftarrow} F(\bar{A}\mathfrak{/m}^n)$.
\item[{\rm (3)}]{\rm (Seperation)}
\begin{enumerate}
\item[{\rm (a)}] Let $A_0$ be a \emph{geometric discrete valuation ring}, which is a localization of a finite type $\mathcal{O}_S$-algebra with residue field of finite type over $\mathcal{O}_S$. Let $K,k$ be its fraction field and residue field respectively. If $\xi,\eta \in F(A_0)$ induce the same element in $F(K)$ and $F(k)$, then $\xi=\eta$.
\item[{\rm (b)}] Let $A_0$ be an $\mathcal{O}_S$-integral domain of finite type. Let $\xi,\eta \in F(A_0)$. Suppose that there is a dense set $\mathcal{S}$ in $\text{Spec}(A_0)$ such that $\xi=\eta$ in $F(k(s))$ for all $s \in \mathcal{S}$. Then $\xi=\eta$ on a non-empty open subset of $\text{Spec}(A_0)$.
\end{enumerate}
\item[{\rm (4)}]{\rm (Deformation)}
\begin{enumerate}
\item[{\rm (a)}] The module $D=D(A_0,M,\xi)$ commutes with localization in $A_0$ and is a finite module when $M$ is free of rank one.
\item[{\rm (b)}] The module operates freely on $F_\xi(A')$ when $M$ is of length one.
\item[{\rm (c)}] Let $A_0$ be an $\mathcal{O}_S$-integral domain of finite type. There is a non-empty open set $U$ of $\text{Spec}(A_0)$ such that for every closed point $s \in U$, we have $$D \otimes_{A_0} k(s)=D(k,M\otimes_{A_0}k(s), \xi_s)$$.
\end{enumerate}

\item[{\rm (5)}]{\rm (Obstruction)} Suppose we have a deformation situation $(A' \rightarrow A \rightarrow A_0,M,\xi)$.
\begin{enumerate}
\item[{\rm (a)}] Let $A_0$ be of finite type and $M$ of length one. Let
\begin{center}
\begin{tikzcd}
B' \arrow[r] \arrow[d]  & B \arrow[r] \arrow[d] & A_0 \arrow[d] \\
A' \arrow[r] & A \arrow[r] & A_0
\end{tikzcd}
\end{center}
be a diagram of infinitesimal extensions of $A_0$ with $B'=A' \times_A B$. If $b \in F(B)$ is an element lying over $\xi$ whose image $a \in F(A)$ can be lifted to $F(A')$, then $b$ can be lifted to $F(B')$.
\item[{\rm (b)}] $A_0$ is a geometric discrete valuation ring with fraction field $K$ and $M$ free of rank one. Denote by $A_K, A'_K$ the localizations of $A,A'$ respectively. If the image of $\xi$ in $F(A_K)$ can be lifted to $F(A'_K)$, then its image in $F(A_0 \times_K A_K)$ can be lifted to $F(A_0 \times_K A'_K)$.

\item[{\rm (c)}] With the same notation in 5(b). Let $M$ be a free module of rank $n$ and $\xi \in F(A)$. Suppose that for every one-dimensional quotient $M^*_K$ of $M_K$ the lifting of $\xi_K$ to $F(A^*_K)$ is obstructed, where $A'_K \rightarrow A^*_K \rightarrow A_K$ is the extension determined by $M^*_K$. Then there is a non-empty open set $U$ of ${\rm Spec}(A_0)$ such that for every quotient $\epsilon: M  \rightarrow M^*$ of length one with support in $U$,  the lifting of $\xi$ to $F(A^*)$ is obstructed, where $A' \rightarrow A^* \rightarrow A$ denotes the resulting extension.
\end{enumerate}

\end{enumerate}
\end{thm}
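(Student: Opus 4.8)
The statement to prove is Artin's representability criterion itself, so the goal is to exhibit $F$ as a separated algebraic space locally of finite type over $S$ by producing a surjective \'etale morphism from a scheme together with a representable, separated diagonal. The plan is to manufacture \'etale charts for $F$ one geometric point at a time: starting from a field-valued point I would build a formal versal deformation out of the deformation and obstruction calculus, algebraize it by Artin approximation, promote the resulting morphism to an \'etale one on an open neighborhood, and finally glue the charts using the sheaf property in condition~(1) while controlling the diagonal by the separation conditions in~(3).

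First I would fix $\xi_0 \in F(k_0)$ with $k_0$ a field of finite type over $S$ and construct a formal versal deformation of $\xi_0$ over a complete noetherian local $\mathcal{O}_S$-algebra $\bar{A}$ with residue field $k_0$. This is done inductively over the infinitesimal extensions $\bar{A}/\mathfrak{m}^{n+1} \to \bar{A}/\mathfrak{m}^{n}$. Condition~(4a) guarantees that the first-order deformations form a finite module $D = D(k_0,k_0,\xi_0)$, so the hull sits inside a power series ring $k_0[[t_1,\dots,t_d]]$ with $d = \dim_{k_0} D$; condition~(5a) supplies the obstruction class controlling each lift and its compatibility with the fibre-product extensions $B' = A' \times_A B$, which is precisely what lets me assemble the successive obstructions into the defining relations of $\bar{A}$. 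The output is a compatible system $(\xi_n) \in \varprojlim F(\bar{A}/\mathfrak{m}^{n+1})$ that is versal, i.e. induces a surjection on the relevant deformation modules.

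Next I would algebraize. By itself condition~(2) only approximates $(\xi_n)$ densely by an element of $F(\bar{A})$, so to obtain an honest finite-type chart I would invoke Artin's approximation theorem, available because $F$ is a locally finitely presented fppf sheaf by condition~(1). This yields a finite-type $\mathcal{O}_S$-algebra $A$, a point $s \in \Spec A$ with residue field $k_0$, and $\xi_A \in F(A)$ agreeing with the versal system to high order, hence a candidate chart $u \colon \Spec A \to F$. I would then check that $u$ is \'etale near $s$: versality re-expressed through conditions~(4) and~(5) makes $u$ formally smooth, while the free-action condition~(4b) on length-one extensions --- together with the fact that $F$ is set-valued and so carries no infinitesimal automorphisms --- forces $u$ to be unramified, hence formally \'etale, at $s$. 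The openness conditions~(4c) and~(5c), which state that the deformation and obstruction modules commute with passage to closed points over a dense open set, upgrade this to \'etaleness on an open $U \subseteq \Spec A$ containing $s$. Ranging over all points and taking $\coprod_i U_i \to F$ produces a surjective \'etale cover; the generic condition~(3b) and the valuative condition~(3a) then show that the diagonal $\Delta_F$ is representable and separated, completing the verification that $F$ is a separated algebraic space locally of finite type over $S$.

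The main obstacle is the passage from the formal to the finite-type world together with the preservation of versality. Building $(\xi_n)$ is essentially bookkeeping in the obstruction calculus of~(4) and~(5), and gluing is formal once the charts exist; but turning the merely density-approximated formal object of condition~(2) into an honest chart $u$, and then verifying that $u$ remains versal --- and therefore smooth --- at $s$ rather than only in the formal completion, is exactly where Artin's approximation theorem does the heavy lifting and where the quantitative strength of conditions~(2),~(4c), and~(5c) becomes indispensable.
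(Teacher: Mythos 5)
There is nothing in the paper to compare your proposal against: Theorem \ref{301} is quoted verbatim from Artin \cite[Theorem 5.3]{Art} and is used as a black box --- the paper never proves it, only verifies its hypotheses (1)--(5) for the functor $\mathcal{M}_{\mathcal{X},\mathcal{L},Higgs}$ in \S 3.2--3.3. So the relevant benchmark is Artin's own proof, and your outline is essentially a faithful reconstruction of it: build a formal versal deformation at each field-valued point using the deformation calculus (the pseudo-torsor structure on $F_{\xi}(A')$ coming from the definition of a deformation theory together with (4a), (4b), and the Schlessinger-type compatibility (5a)), use condition (2) to pass from the compatible system $(\xi_n)$ to an element of $F(\bar{A})$ agreeing to second order, algebraize by Artin approximation, spread out using the openness conditions (4c), (5c), and handle the diagonal with (3). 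You also correctly identify the genuine crux --- that condition (2) gives only a densely approximating element, so one must check versality survives the approximation, which is where the approximation theorem carries the weight.

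Three points in your sketch elide real content in Artin's argument and are worth flagging. First, Artin approximation is not available merely because $F$ is a locally finitely presented fppf sheaf; it requires the base to be (the henselization of) a ring of finite type over a field or an excellent Dedekind domain --- satisfied in this paper since $S$ is locally of finite type over $S_0 = \Spec$ of an algebraically closed field, but it is an extra hypothesis, not a consequence of (1). Second, your claim that (4b) plus set-valuedness ``forces $u$ to be unramified, hence formally \'etale'' is too quick: versality gives only surjectivity on deformation modules, and one must pass to a minimal (hull) choice to get bijectivity; freeness of the $D$-action is what makes minimality meaningful, but the reduction to a minimal versal family is a step, not a triviality. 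Third, representability and separatedness of the diagonal do not follow by simply citing (3a) and (3b); Artin applies the whole machinery to equalizer subfunctors (given $\xi, \eta \in F(T)$, the locus where they agree) and uses (3b) for constructibility and (3a) for closure under specialization. None of these is a wrong turn --- each is exactly where Artin's paper spends its effort --- but as written your proposal asserts rather than supplies them.
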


Olsson and Starr used Theorem \ref{301} to prove that the quotient functor $Q(\mathcal{G}/\mathcal{X}/S)$ is represented by a separated and locally finitely presented algebraic space. In other words, the quotient functor $Q(\mathcal{G}/\mathcal{X}/S)$ satisfies conditions (1) to (5) in Theorem \ref{301}.

Now we go back to the moduli problem for $\mathcal{L}$-twisted Higgs bundle $\mathcal{M}_{\mathcal{X},\mathcal{L},Higgs}$. In this section, we use $\mathcal{M}:=\mathcal{M}_{\mathcal{X},\mathcal{L},Higgs}$ to simplify the notation for the moduli problem we are interested in. Note that the problem of representability of $\mathcal{M}$ is \'etale local on $S$. Therefore we may assume that $S$ is an affine scheme and of finite type over ${\rm Spec}(\mathbb{Z})$ as we explained at the beginning of \S 2.2. It is easy to check that the functor $\mathcal{M}$ satisfies the first condition. We will prove conditions $(2)$ and $(3)$ in \S 3.2. The deformation and obstruction theory will be discussed in \S 3.3. Condition $(4)$ will be proved in \S 3.3.1 by constructing a well-defined deformation theory for $\mathcal{M}$. The obstruction property will be discussed in \S 3.3.2.

\subsection{Inverse Limit and Separation}
We prove the inverse limit condition and separation condition in this subsection. Olsson and Starr proved that the quotient functor $Q(\mathcal{G}/\mathcal{X}/S)$ is represented by pa separated, locally finitely-presented algebraic space over $S$ \cite[Theorem 1.1]{OlSt}. In other words, the functor $Q(\mathcal{G}/\mathcal{X}/S)$ preserves the inverse limit and satisfies the separation condition. We use these properties of the quotient functor $Q(\mathcal{G}/\mathcal{X}/S)$ to prove the properties of \emph{inverse limit} and \emph{separation} of the functor $\mathcal{M}$.

To prove a functor $F$ satisfying the inverse limit condition, we have to prove that the map $F(\bar{A}) \rightarrow \lim\limits_{\leftarrow} F(\bar{A}/\mathfrak{m}^n)$ is injective and for any $(\xi_n) \in \lim\limits_{\leftarrow} F(\bar{A}/\mathfrak{m}^n)$ there is an element $\xi' \in F(\bar{A})$ which induces $\xi_1 \in F(\bar{A}/\mathfrak{m}^2)$. Let $\hat{\mathcal{X}}=\lim (\mathcal{X} \otimes_A A/\mathfrak{m}^{n})$. There is a natural morphism $j : \hat{\mathcal{X}} \rightarrow \mathcal{X}$. This morphism induces the following map
\begin{align*}
j: Q(\mathcal{G}/\mathcal{X}/S)(\bar{A}) \rightarrow \lim\limits_{\leftarrow} Q(\mathcal{G}/\mathcal{X}/S)(\bar{A}/\mathfrak{m}^n), \quad \mathcal{F} \rightarrow j^* \mathcal{F}.
\end{align*}
This map is injective and has a dense image by \cite[Theorem 1.1]{OlSt}. In other words, let $(\mathcal{F}_n)$ be an element in $\lim\limits_{\leftarrow} Q(\mathcal{G}/\mathcal{X}/S)(\bar{A}/\mathfrak{m}^n)$. There is an element $\mathcal{F}' \in Q(\mathcal{G}/\mathcal{X}/S)(\bar{A})$ such that $\mathcal{F}'$ induces $\mathcal{F}_1 \in Q(\mathcal{G}/\mathcal{X}/S)(\bar{A}/\mathfrak{m}^2)$. The same argument holds for the sheaf $\text{End}(\mathcal{F})$. Given an element $\text{End}(\mathcal{F}_n) \in \lim\limits_{\leftarrow} Q(\mathcal{G}/\mathcal{X}/S)(\bar{A}/\mathfrak{m}^n)$, there is an element $\text{End}(\mathcal{F}')  \in Q(\mathcal{G}/\mathcal{X}/S)(\bar{A})$ such that $\text{End}(\mathcal{F}')$ induces $\text{End}(\mathcal{F}_1) \in Q(\mathcal{G}/\mathcal{X}/S)(\bar{A}/\mathfrak{m}^2)$. Note that the morphism $j : \hat{\mathcal{X}} \rightarrow \mathcal{X}$ also induces the following map
\begin{align*}
j: \mathcal{M}(\bar{A}) \rightarrow \lim\limits_{\leftarrow} \mathcal{M}(\bar{A}/\mathfrak{m}^n), \quad (\mathcal{F},\Phi) \rightarrow (j^*\mathcal{F},j^* \Phi),
\end{align*}
where $\mathcal{F} \in Q(\mathcal{G}/\mathcal{X}/S)(\bar{A})$ and $\Phi \in \text{End}(\mathcal{F}) \otimes \mathcal{L}$. The map $j$ is clearly injective. Given any element $\left( (\mathcal{F}_n,\Phi_n) \right)_{n \geq 1} \in \lim\limits_{\leftarrow} \mathcal{M}(\bar{A}/\mathfrak{m}^n)$, where $\mathcal{F}_n \in Q(\mathcal{G}/\mathcal{X}/S)(\bar{A}/\mathfrak{m}^n)$ and $\Phi_n \in \text{End}(\mathcal{F}_n)\otimes \mathcal{L}$, there exists $\mathcal{F}' \in Q(\mathcal{G}/\mathcal{X}/S)(\bar{A})$ such that $\mathcal{F}'$ induces $\mathcal{F}_1$. Thus we can find an element $\Phi' \in \text{End}(\mathcal{F}') \otimes \mathcal{L}$ such that $\Phi'$ induces $\Phi_1$. Therefore the map $j: \mathcal{M}(\bar{A}) \rightarrow \lim\limits_{\leftarrow} \mathcal{M}(\bar{A}/\mathfrak{m}^n)$ has a dense image and the functor $\mathcal{M}$ satisfies the inverse limit condition.

Now we are going to prove the separation property. With the same notations as in Theorem \ref{301} (3), let $\xi=(\mathcal{F}_{\xi},\Phi_{\xi})$ and $\eta=(\mathcal{F}_{\eta},\Phi_{\eta})$ be two elements in $\mathcal{M}(A_0)$ inducing the same element in $\mathcal{M}(K)$ and $\mathcal{M}(k)$. By Theorem \ref{201}, the quotient functor $Q(\mathcal{G}/\mathcal{X}/S)$ satisfies the separation condition. Thus we have $\mathcal{F}_{\xi}=\mathcal{F}_{\eta}$, which also implies that $\text{End}(\mathcal{F}_{\xi}) \otimes \mathcal{L}=\text{End}(\mathcal{F}_{\eta}) \otimes \mathcal{L}$. Therefore we have $\xi=\eta$. The same argument works for the condition ${\rm (3b)}$. This finishes the proof of the separation condition.

\subsection{Deformation and Obstruction Theory}
With respect to the same notation as in \S 3.1, let $\xi=(\mathcal{F},\Phi) \in \mathcal{M}(A_0)$. The key object we will study in \S 3.3 about the deformation and obstruction theory is $\mathcal{M}_{\xi}(A_0[M])$, where $\mathcal{M}_{\xi}(A_0[M])$ is the set of elements in $\mathcal{M}(A_0[M])$ whose restriction to $A_0$ is $\xi$. The multiplication of the ring $A_0[M]$ is defined as
\begin{align*}
(a,m)(a',m')=(aa',am'+a'm).
\end{align*}
Clearly, $M$ is a nilpotent ideal in $A_0[M]$.

Note that the conditions of the deformation theory and obstruction theory ($(4)$ and $(5)$ in Theorem \ref{301}) are local property on both $\mathcal{X}$ and $S$. Thus we can take a local chart $U_{\mathcal{X}} \rightarrow \mathcal{X}$ of $\mathcal{X}$ and a local chart $U_S \rightarrow S$ of $S$, where $U_{\mathcal{X}}$ and $U_{\mathcal{S}}$ are schemes. We have the following diagram
\begin{center}
\begin{tikzcd}
U_{\mathcal{X}} \times_S U_S   \arrow[r] \arrow[d]  & \mathcal{X} \times_S U_S \arrow[r] \arrow[d] & U_S \arrow[d] \\
U_{\mathcal{X}} \arrow[r] & \mathcal{X} \arrow[r] & S
\end{tikzcd}
\end{center}
Thus we work on the separated and locally finitely-presented morphism $U_{\mathcal{X}} \times_S U_S \rightarrow U_S$ locally. By the definition of Deligne-Mumford stack, $U_{\mathcal{X}} \times_S U_S$ is an algebraic space. Thus we may assume that $S$ is an affine scheme and $\mathcal{X}$ is an algebraic space when we do the deformation (\S 3.3.1) and obstruction theory (\S 3.3.2).

At the end of the setup of the deformation and obstruction theory, we review the statement of the well-known \emph{five lemma}, which will be used frequently in this subsection.
\begin{lem}[Five Lemma]
Assume that all objects below are in an abelian category.
\begin{center}
\begin{tikzcd}
A  \arrow[r] \arrow[d,"a"]  & B \arrow[r] \arrow[d,"b"] & C \arrow[r] \arrow[d,"c"] & D \arrow[r] \arrow[d,"d"] & E \arrow[d,"e"]\\
A' \arrow[r] & B' \arrow[r] & C' \arrow[r] & D' \arrow[r] & E'
\end{tikzcd}
\end{center}
If the rows are exact, $b$ and $d$ are isomorphisms, $a$ is an epimorphism and $e$ is a monomorphism, then $c$ is an isomorphism.
\end{lem}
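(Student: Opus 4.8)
The plan is to deduce the statement from two applications of the classical Four Lemma, together with the fact that in an abelian category a morphism which is simultaneously a monomorphism and an epimorphism is an isomorphism. Since the assertion is purely diagrammatic, I would first reduce to the case of modules over a ring: by the Freyd--Mitchell embedding theorem, the small abelian subcategory generated by the finitely many objects and morphisms appearing in the diagram admits a fully faithful, exact embedding into a category of modules over some ring. Exactness of rows, as well as the properties of being a monomorphism, an epimorphism, or an isomorphism, are all both preserved and reflected by such an embedding, so it suffices to verify the conclusion for modules, where an honest element chase is available. (Alternatively, one may argue directly with Mac Lane's calculus of members of an object; the embedding merely makes the bookkeeping cleaner.)

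I would then split the proof into two independent claims: that $c$ is a monomorphism, and that $c$ is an epimorphism. For the first claim I use only the left four columns $A \to B \to C \to D$ together with the hypotheses that $b,d$ are monomorphisms and $a$ is an epimorphism. Take $x \in C$ with $c(x)=0$. Pushing $x$ to the right into $D$ and using that $d$ is a monomorphism shows that the image of $x$ in $D$ vanishes, so by exactness of the top row $x$ is the image of some $y \in B$. Chasing $b(y)$ into $C'$ gives $c(x)=0$, so by exactness of the bottom row $b(y)$ comes from some element of $A'$; since $a$ is an epimorphism this element lifts to $A$, and subtracting the image of that lift from $y$ produces a new preimage of $x$ whose image under $b$ is $0$. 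As $b$ is a monomorphism, this new preimage is $0$, whence $x=0$. This establishes that $c$ is a monomorphism.

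The second claim, that $c$ is an epimorphism, is the exact dual, carried out on the right four columns $B \to C \to D \to E$ using the hypotheses that $b,d$ are epimorphisms and $e$ is a monomorphism; given $z' \in C'$ one chases leftward in the same pattern, and no new idea is required. Combining the two claims, $c$ is both a monomorphism and an epimorphism, and hence an isomorphism in the abelian category, which completes the argument.

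The main obstacle is not the logic of either chase but the legitimacy of element-style reasoning in an arbitrary abelian category; the Freyd--Mitchell reduction is what makes the two chases rigorous. Within the chases themselves, the only delicate step is the interaction in the first claim where the surjectivity of $a$ is used to adjust the preimage $y$ so that the injectivity of $b$ can be applied, and this is precisely where both of the ``outer'' hypotheses enter.
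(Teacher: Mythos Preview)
Your argument is correct and is the standard proof: reduce to a module category via Freyd--Mitchell (or use members), then run the two Four-Lemma chases to get that $c$ is monic and epic. The paper itself offers no proof of this lemma at all---it merely records the statement as ``well-known'' for later use---so there is nothing to compare against; your write-up in fact supplies more than the paper does.
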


\subsubsection{Deformation Theory}
By the definition of the deformation theory in \S 3.1, we have to construct an $A_0$-module $D(A_0,M,\xi)$ for each triple $(A_0,M,\xi)$, where $\xi \in \mathcal{M}(A_0)$. In this section, we calculate the $A_0$-module $\mathcal{M}_{\xi}(A_0[M])$ and prove that this module is the correct deformation theory for $\mathcal{M}$. Biswas and Ramanan calculated the infinitesimal deformation theory for $\mathcal{M}$, i.e. $\mathcal{M}_{\xi}(\mathbb{C}[\varepsilon])$, $\xi \in \mathcal{M}(\mathbb{C})$ \cite{BisRam}. We generalize their approach to the deformation theory in this paper.

Let us consider a special case first. Let $A'=A_0[M]:=A_0 \oplus M$. Let $(\mathcal{F},\Phi)$ be an element in $\mathcal{M}(A_0)$. Define $\mathcal{F}'=\mathcal{F} \times_{\text{Spec }A_0} \text{Spec } A'$. Abusing the notation, we consider $\mathcal{F}'$ as $\mathcal{F}\oplus\mathcal{F}[M]$. For a section $s$ of $\text{End}(\mathcal{F}) \otimes M$, the corresponding automorphism of $\mathcal{F}'$ is denoted by $1+s$. Moreover, if $v+w$ is a section of $\text{End}(\mathcal{F}') \otimes \mathcal{L}'$, where $\mathcal{L}'$ is the pull-back of $\mathcal{L}$ under the projection $ \text{Spec } A' \rightarrow \text{Spec } A_0$, we have
\begin{align*}
\rho(1+s)(v+w)=v+w+\rho(s)(v),
\end{align*}
where $\rho$ is the natural action of $\text{End}(\mathcal{F})$ on itself. The deformation complex $C_M^{\bullet}(\mathcal{F},\Phi)$ is defined as follows
\begin{align*}
C_M^{\bullet}(\mathcal{F},\Phi): C_M^0(\mathcal{F}) = \text{End}(\mathcal{F}) \otimes M \xrightarrow{e(\Phi)} C_M^1(\mathcal{F}) = \text{End}(\mathcal{F}) \otimes \mathcal{L} \otimes M,
\end{align*}
where the map $e(\Phi)$ is given by
\begin{align*}
e(\Phi)(s)=-\rho(s)(\Phi).
\end{align*}
If there is no ambiguity, we omit the notations $M$, $\mathcal{F}$, $\Phi$ and use the following notation
\begin{align*}
C^{\bullet}: C^0 = \text{End}(\mathcal{F}) \otimes M \xrightarrow{e(\Phi)} C^1 = \text{End}(\mathcal{F}) \otimes \mathcal{L} \otimes M
\end{align*}
for the deformation complex.

Now we are ready to calculate $\mathcal{M}_{\xi}(A_0[M])$. The following proposition is a generalization of Theorem 2.3 in \cite{BisRam}.
\begin{prop}\label{302}
Let $\xi =(\mathcal{F},\Phi)$ be an $\mathcal{L}$-twisted Hitchin pair in $\mathcal{M}(A_0)$. The set $\mathcal{M}_{\xi}(A_0[M])$ is isomorphic to the hypercohomology group $\mathbb{H}^1(C^{\bullet})$, where $C^{\bullet}$ is the complex
\begin{align*}
C^{\bullet}:C^0 ={\rm End}(\mathcal{F}) \otimes M \xrightarrow{e(\Phi)} C^1 = {\rm End}(\mathcal{F}) \otimes \mathcal{L} \otimes M,
\end{align*}
where $e(\Phi)(s)=-\rho(s)(\Phi)$ is defined as above.
\end{prop}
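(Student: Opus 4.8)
The plan is to identify each element of $\mathcal{M}_\xi(A_0[M])$ with compatible deformation data for the sheaf $\mathcal{F}$ and the Higgs field $\Phi$, and then to recognize this data as a degree-one cocycle of the two-term complex $C^\bullet$. By the reduction carried out just above, I may assume that $\mathcal{X}$ is an algebraic space and $S$ is affine, so that all relevant sheaves live on $\mathcal{X}_{A_0} := \mathcal{X} \times_S \Spec A_0$, and that $A_0[M] = A_0 \oplus M$ is a square-zero extension. An element of $\mathcal{M}_\xi(A_0[M])$ is then a flat deformation $\mathcal{F}'$ of $\mathcal{F}$ over $A_0[M]$ together with a lift $\Phi' : \mathcal{F}' \rightarrow \mathcal{F}' \otimes \mathcal{L}'$ of $\Phi$, taken up to isomorphism inducing the identity on $\xi$; here $\mathcal{L}'$ is the pullback of $\mathcal{L}$ as in the setup.

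First I would treat the deformation of the sheaf alone. Over a sufficiently fine \'etale cover $\{U_i \rightarrow \mathcal{X}_{A_0}\}$, the restriction $\mathcal{F}|_{U_i}$ admits the split deformation $\mathcal{F}|_{U_i} \otimes_{A_0} A_0[M]$, and since $M^2 = 0$ any automorphism of such a split deformation reducing to the identity modulo $M$ has the form $1 + a$ with $a \in \Gamma(U_i, \text{End}(\mathcal{F}) \otimes M)$, which is exactly the notation $1+s$ fixed in the text. Hence $\mathcal{F}'$ is reconstructed from transition data $g_{ij} = 1 + a_{ij}$, with $a_{ij} \in \Gamma(U_{ij}, \text{End}(\mathcal{F}) \otimes M)$ satisfying the cocycle condition, and the class $\{a_{ij}\}$ in $H^1(\text{End}(\mathcal{F}) \otimes M)$ records $\mathcal{F}'$ as a deformation of $\mathcal{F}$.

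Next I would add the Higgs field. Locally lift $\Phi$ to $\Phi_i = \Phi + \phi_i$ with $\phi_i \in \Gamma(U_i, \text{End}(\mathcal{F}) \otimes \mathcal{L} \otimes M)$. Compatibility of the $\Phi_i$ under the gluing $g_{ij}$ is governed by the conjugation action: using the formula $\rho(1+s)(v+w) = v + w + \rho(s)(v)$ together with $(1+a_{ij})^{-1} = 1 - a_{ij}$, the gluing condition becomes $\phi_i - \phi_j = -\rho(a_{ij})(\Phi) = e(\Phi)(a_{ij})$ on $U_{ij}$. Thus the pair $(\{a_{ij}\}, \{\phi_i\})$ is precisely a degree-one cocycle of the total complex associated to $C^\bullet$, changing the local trivializations of $\mathcal{F}'$ and the local lifts of $\Phi$ alters it by a coboundary, and isomorphic deformed pairs give the same class. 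Assembling this yields a well-defined map $\mathcal{M}_\xi(A_0[M]) \rightarrow \mathbb{H}^1(C^\bullet)$, while the reverse construction, building $\mathcal{F}'$ and $\Phi'$ from a cocycle, shows it is a bijection respecting the $A_0$-module structures.

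I expect the main obstacle to be technical rather than conceptual, and twofold. First, because $\mathcal{F}$ is only a flat, locally finitely-presented quasi-coherent sheaf, I must justify that its local deformations over the square-zero extension are split with automorphism sheaf $1 + \text{End}(\mathcal{F}) \otimes M$; this is immediate when $\mathcal{F}$ is locally free (the setting of \cite{BisRam}), and in general forces one to interpret the first-order deformations through $\text{Ext}^1(\mathcal{F}, \mathcal{F} \otimes M)$ and to check that the terms reduce to $\text{End}(\mathcal{F})$-cohomology on the relevant locus. Second, since $\mathcal{X}$ is an algebraic space and not a scheme, I would prefer to organize the bookkeeping through the hypercohomology spectral sequence of $C^\bullet$, using the exact sequence
\begin{align*}
0 \rightarrow \operatorname{coker}\!\big(H^0(C^0) \xrightarrow{e(\Phi)} H^0(C^1)\big) \rightarrow \mathbb{H}^1(C^\bullet) \rightarrow \ker\!\big(H^1(C^0) \xrightarrow{e(\Phi)} H^1(C^1)\big) \rightarrow 0,
\end{align*}
rather than relying on literal Čech cocycles: the first summand matches the deformations of $\Phi$ for fixed $\mathcal{F}$, and the second matches those deformations of $\mathcal{F}$ that admit a compatible lift of $\Phi$. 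The crucial verification throughout is that the conjugation computation produces exactly the differential $e(\Phi)$ of the complex.
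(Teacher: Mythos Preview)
Your proposal is correct and follows essentially the same route as the paper: choose an \'etale cover, trivialize the deformation locally as $1+s_{ij}$ and $\Phi+t_i$, and identify the gluing conditions $s_{ij}+s_{jk}=s_{ik}$ and $t_i-t_j=e(\Phi)(s_{ij})$ with a \v{C}ech $1$-cocycle for $C^{\bullet}$, with coboundaries corresponding to isomorphisms of the pair. The technical caveats you flag (local splitting when $\mathcal{F}$ is only flat and finitely presented, and the use of \v{C}ech cohomology on an algebraic space) are glossed over in the paper as well, so your treatment is if anything slightly more careful; your alternative packaging via the short exact sequence for $\mathbb{H}^1$ is exactly the content of the paper's Corollary~\ref{303}.
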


\begin{proof}
Let $\mathcal{U}=\{U_i=\text{Spec}(A_i)\}$ be an \'etale covering of $\mathcal{X}$ by open affine schemes. The covering $\mathcal{U}$ of $\mathcal{X}$ also gives an \'etale covering $\{U_i \times \text{Spec}(A_0)\}$ of $\mathcal{X} \times \text{Spec}(A_0)$. To be precise, the product $U_i \times \text{Spec}(A_0)$ is taken over $S$, i.e. $U_i \times_{S} \text{Spec}(A_0)$. We omit the base scheme $S$ to simplify the notation. Define $U_i[M]=U_i \times \text{Spec}(A_0[M])$. Set
\begin{align*}
\text{End}(\mathcal{F})\otimes M|_{U_i[M]}=C^0_i, \quad \text{End}(\mathcal{F})\otimes \mathcal{L} \otimes M|_{U_i[M]}=C^1_{i},
\end{align*}
where $C^0_i$ and $C^1_i$ are $A_0$-modules. Similarly, modules $C^0_{ij}$ (resp. $C^1_{ij}$) are resctrictions of $C^0$ (resp. $C^1$) to $U_{ij}=U_i \bigcap U_j$. We consider the following $\hat{C}$ech resolution of $C^{\bullet}$:
\begin{center}
\begin{tikzcd}
& 0 \arrow[d] & 0 \arrow[d] & \\
0 \arrow[r] & C^0 \arrow[r,"e(\Phi)"] \arrow[d,"d^0_0"] & C^1 \arrow[r] \arrow[d,"d^1_0"] & 0 \\
0 \arrow[r] & \sum C_i^0 \arrow[r,"e(\Phi)"] \arrow[d,"d^0_1"] & \sum C_i^1 \arrow[r] \arrow[d,"d^1_1"] & 0 \\
0 \arrow[r] & \sum C_{ij}^0 \arrow[r,"e(\Phi)"] \arrow[d,"d^0_2"] & \sum C_{ij}^1 \arrow[r] \arrow[d,"d^1_2"] & 0 \\
& \vdots  & \vdots  &
\end{tikzcd}
\end{center}
We calculate the first hypercohomology $\mathbb{H}^1(C^{\bullet})$ from the above diagram. Let $Z$ be the set of pairs $(s_{ij}, t_i)$, where $s_{ij} \in C^0_{ij}$ and $t_i \in C^1_i$ satisfying the following conditions:
\begin{enumerate}
\item $s_{ij}+s_{jk}=s_{ik}$ as elements of $C^0_{ijk}$.
\item $t_i-t_j=e(\Phi)(s_{ij})$ as elements of $C^1_{ij}$.
\end{enumerate}
Let $B$ be the subset of $Z$ consisting of elements $(s_i-s_j,e(\Phi)(s_i))$, where $s_i \in C^0_i$. Clearly, $\mathbb{H}^1(C^{\bullet}) = Z/B$.

Given an element $(s_{ij},t_i) \in Z$, we shall construct a $\mathcal{L}$-twisted Higgs bundle $(\mathcal{F}',\Phi')$ on $\mathcal{X}\times \text{Spec}(A_0[M])$ such that $\mathcal{F}'|_{\mathcal{X}\times \text{Spec}(A_0)} \cong \mathcal{F}$ and $\Phi' |_{\mathcal{X}\times \text{Spec}(A_0)} \cong \Phi$.

For each $U_i[M]$, there is a natural projection $\pi: U_i[M] \rightarrow U_i \times \text{Spec}(A_0)$. Take the sheaf $\mathcal{F}'_i=\pi^*(\mathcal{F}|_{U_i \times \text{Spec}(A_0)})$. By the first condition of $Z$, we can identify the restrictions of $\mathcal{F}'_i$ and $\mathcal{F}'_j$ to $U_{ij}[M]$ by the isomorphism $1+s_{ij}$ of $\mathcal{F}'_{ij}$. Therefore we get a well-defined quasi-coherent sheaf $\mathcal{F}'$ on $\mathcal{X} \otimes \text{Spec}(A_0[M])$.

On each affine set $U_i[M]$, we have $\Phi_i+t_i : End(\mathcal{F}'_i) \otimes \mathcal{L}'$. It is easy to check $$e(\Phi_i + t_i)(1+s_{ij})=\Phi_j+t_j$$
by the second condition of $Z$. Therefore $\{\Phi_i+t_i\}$ can be glued together to give a global homomorphism $\Phi':\mathcal{F}' \rightarrow  \mathcal{F}' \otimes \mathcal{L}'$. For the element $(s_{ij},t_i)$ in $Z$, we construct an element $(\mathcal{F}',\Phi')$ in $\mathcal{M}_{\xi}(A_0[M])$.

Let $(s_{ij},t_i)$ be an element in $B$. In other words, $s_{ij}=s_i-s_j$ and $t_i=e(\Phi)(s_i)$. The identification of $\mathcal{F}'_i \cong \mathcal{F}'_j$ on $U_{ij}[M]$ is given by the isomorphism $$1+s_{ij}=1+(s_i-s_j).$$ Consider the following diagram
\begin{center}
\begin{tikzcd}
\mathcal{F}'_{ij} \arrow[d,"1+s_{ij}"] \arrow[r,"1+s_{i}"] & \mathcal{F}'_{ij} \arrow[d, "\text{Id}"]  \\
\mathcal{F}'_{ij} \arrow[r,"1+s_{j}"] & \mathcal{F}'_{ij}
\end{tikzcd}
\end{center}
The commutativity of the above diagram implies that $E'$ is trivial. Similarly, we have $$e(\Phi_i +t_i)(1+s_i)=\Phi_i.$$
Therefore the associated Hitchin pair $(\mathcal{F}',\Phi')$ is isomorphic to $(\pi^*\mathcal{F},\pi^* \Phi)$.

The above construction gives us a well-defined map from $\mathbb{H}^1(C^{\bullet})$ to $\mathcal{M}_{\xi}(A_0[M])$.

Now we have to construct the inverse map from $\mathcal{M}_{\xi}(A_0[M])$ to $\mathbb{H}^1(C^{\bullet})$. Let $(\mathcal{F}',\Phi') \in \mathcal{M}_{\xi}(A_0[M])$ be a $\mathcal{L}$-twisted Hitchin pairs over $\mathcal{X} \times \text{Spec}(A_0[M])$ such that $(\mathcal{F}'|_{\mathcal{X} \times \text{Spec}(A_0)},\Phi'|_{\mathcal{X} \times \text{Spec}(A_0)})=\xi=(\mathcal{F},\Phi)$. We still use the covering $\{U_i[M]\}$ of $\mathcal{X} \times \text{Spec}(A_0[M])$ to work on this problem locally.

Clearly, $\mathcal{F}'_i=\mathcal{F}'|_{U_i[M]}$ is the pull-back of $\mathcal{F}|_{\mathcal{X} \times \text{Spec}(A_0)}$. We obtain $\mathcal{F}'$ by gluing $\mathcal{F}'_i$ together. Thus the autormorphism $1+s_{ij}$ of $\mathcal{F}'_{ij}$ over the intersection $U_{ij}[M]$ should satisfy the condition $s_{ij}+s_{jk}=s_{ik}$ on $U_{ijk}[M]$. Similarly, $\Phi'$ is given by $\Phi_i+t_i$ locally, where $\Phi_i \in \text{End}(\mathcal{F}) \otimes \mathcal{L}|_{U_i}$ and $t_i \in \text{End}(\mathcal{F})\otimes \mathcal{L} \otimes M|_{U_i[M]}$. By the compatbility condition of $\Phi_i+t_i$ on $U_{ij}[M]$, we have $$e(\Phi_i+t_i)(1+s_{ij})=\Phi_j+t_j,$$
which gives us $e(\Phi)(s_{ij})=t_i-t_j$. Therefore, $(s_{ij},t_i) \in Z$.

The above discussion gives us a map from $\mathcal{M}_{\xi}(A_0[M])$ to $\mathbb{H}^1(C^{\bullet})$. It is easy to check that these two maps are inverse to each other. We finish the proof of this proposition.
\end{proof}

\begin{cor}\label{303}
We have the following long exact sequence
\begin{align*}
0 & \rightarrow \mathbb{H}^0(C^{\bullet}) \rightarrow H^0(\mathcal{X},C^0) \rightarrow H^0(\mathcal{X},C^1) \\
 & \rightarrow \mathbb{H}^1(C^{\bullet}) \rightarrow H^1(\mathcal{X},C^0) \rightarrow H^1(\mathcal{X},C^1) \rightarrow \mathbb{H}^2(C^{\bullet}) \rightarrow \dots \quad .
\end{align*}
\end{cor}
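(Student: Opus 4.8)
The plan is to obtain this as the hypercohomology long exact sequence attached to the two-term complex $C^\bullet$. Since $C^\bullet$ is concentrated in degrees $0$ and $1$, the stupid (brutal) truncation gives a short exact sequence of complexes
\begin{align*}
0 \rightarrow C^1[-1] \rightarrow C^\bullet \rightarrow C^0 \rightarrow 0,
\end{align*}
in which $C^0$ denotes the complex with the sheaf $\text{End}(\mathcal{F})\otimes M$ placed in degree $0$ and zero elsewhere, and $C^1[-1]$ denotes the complex with the sheaf $\text{End}(\mathcal{F})\otimes\mathcal{L}\otimes M$ placed in degree $1$ and zero elsewhere; the surjection is the identity in degree $0$ and the inclusion is the identity in degree $1$, and one checks directly that both are maps of complexes because the only nonzero differential of $C^\bullet$ is $e(\Phi): C^0 \rightarrow C^1$. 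First I would note that the hypercohomology of a complex concentrated in a single degree collapses to ordinary sheaf cohomology, so that $\mathbb{H}^n(C^0) = H^n(\mathcal{X}, C^0)$ and $\mathbb{H}^n(C^1[-1]) = H^{n-1}(\mathcal{X}, C^1)$.

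Applying the functorial long exact sequence in hypercohomology to the short exact sequence of complexes above, and substituting these two identifications, yields for every $n$ a fragment
\begin{align*}
\cdots \rightarrow H^{n-1}(\mathcal{X}, C^1) \rightarrow \mathbb{H}^n(C^\bullet) \rightarrow H^n(\mathcal{X}, C^0) \rightarrow H^n(\mathcal{X}, C^1) \rightarrow \mathbb{H}^{n+1}(C^\bullet) \rightarrow \cdots .
\end{align*}
Splicing these fragments together and starting at $n=0$, where $H^{-1}(\mathcal{X}, C^1)=0$ forces the sequence to begin with the injection $\mathbb{H}^0(C^\bullet) \hookrightarrow H^0(\mathcal{X}, C^0)$, reproduces precisely the asserted long exact sequence.

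Alternatively, I could read the same sequence directly off the \v{C}ech double complex constructed in the proof of Proposition \ref{302}: that double complex has only the two nonzero columns coming from $C^0$ and $C^1$, and the column-filtration spectral sequence of any double complex supported on two adjacent columns degenerates into a long exact sequence of exactly this shape. The only point requiring a moment of care is the identification of the horizontal maps $H^n(\mathcal{X}, C^0) \rightarrow H^n(\mathcal{X}, C^1)$ and of the connecting homomorphisms: the former are the maps induced on cohomology by the differential $e(\Phi)$ of $C^\bullet$, which is immediate since that differential is the single nonzero map in the complex, and the latter is the standard fact that the boundary map in the long exact sequence of a stupid filtration is induced by the differential of the original complex. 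I expect no genuine obstacle here, as the statement is entirely formal homological algebra once the two-term structure of $C^\bullet$ is used.
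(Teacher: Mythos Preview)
Your argument is correct and is precisely the standard derivation the paper is gesturing at: the paper's own proof is a single sentence saying the long exact sequence ``follows directly from the definition of hypercohomology (see \cite{BisRam}),'' and your stupid-truncation / two-column spectral sequence argument is exactly what that sentence unpacks to. In other words, you have supplied the details the paper omits; there is no substantive difference in approach.
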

\begin{proof}
This long exact sequence follows directly from the definition of hypercohomology (see \cite{BisRam}).
\end{proof}

\begin{cor}\label{304}
Let $0 \rightarrow M_1 \rightarrow M_2 \rightarrow M_3 \rightarrow 0$ be a short exact sequence for finitely generated $A_0$-modules. We have a long exact sequence for hypercohomology
\begin{align*}
\dots \rightarrow \mathbb{H}^i(C_{M_1}^{\bullet}) \rightarrow \mathbb{H}^i(C_{M_2}^{\bullet}) \rightarrow \mathbb{H}^i(C_{M_3}^{\bullet}) \rightarrow \mathbb{H}^{i+1}(C_{M_1}^{\bullet}) \rightarrow \dots \quad .
\end{align*}
\end{cor}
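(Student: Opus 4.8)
The plan is to realize the whole family of complexes $C_M^{\bullet}$ as the image of a single fixed complex under the exact-functor-like operation $(-)\otimes_{A_0} M$, and then to deduce the long exact sequence from the standard long exact sequence in hypercohomology attached to a short exact sequence of complexes. Concretely, writing $C^{\bullet}$ for the complex $\text{End}(\mathcal{F}) \xrightarrow{e(\Phi)} \text{End}(\mathcal{F}) \otimes \mathcal{L}$ of sheaves on $\mathcal{X} \times \text{Spec}(A_0)$, one has a canonical degreewise identification $C_M^{\bullet} = C^{\bullet} \otimes_{A_0} M$, with differential $e(\Phi) \otimes \mathrm{id}_M$, and this identification is functorial in the $A_0$-module $M$. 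First I would apply the functor $C^{\bullet} \otimes_{A_0} (-)$ to the given short exact sequence $0 \to M_1 \to M_2 \to M_3 \to 0$. Since every differential has the form $e(\Phi)\otimes \mathrm{id}$, the induced maps commute with the differentials, so we obtain a genuine sequence of maps of complexes of sheaves
\[
0 \to C_{M_1}^{\bullet} \to C_{M_2}^{\bullet} \to C_{M_3}^{\bullet} \to 0.
\]

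The main point to verify, and the step where essentially all the work lies, is that this sequence of complexes is exact in each degree. In degree $0$ the terms are $\text{End}(\mathcal{F}) \otimes_{A_0} M_j$ and in degree $1$ they are $\text{End}(\mathcal{F}) \otimes \mathcal{L} \otimes_{A_0} M_j$, so exactness amounts to the claim that tensoring the short exact sequence of $A_0$-modules with the coefficient sheaf preserves exactness. Right-exactness of the tensor product gives exactness at the middle and right terms for free; the only genuine issue is injectivity of the left map $C_{M_1}^{\bullet}\to C_{M_2}^{\bullet}$, which is controlled by $\mathrm{Tor}_1^{A_0}(\text{End}(\mathcal{F}),M_3)$ (and the analogous group after twisting by $\mathcal{L}$). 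Since $\mathcal{L}$ is a line bundle it is locally free, hence $A_0$-flat, so the twist by $\mathcal{L}$ is harmless and the only content is the $A_0$-flatness of $\text{End}(\mathcal{F})$; this I would deduce from the flatness of $\mathcal{F}$ over $A_0$ built into the quotient functor $Q(\mathcal{G}/\mathcal{X}/S)$ (in the locally free case $\text{End}(\mathcal{F})$ is again locally free and hence flat, and in general one invokes that $\mathcal{F}$ is flat and locally finitely presented over $A_0$). The vanishing of these $\mathrm{Tor}$ terms is exactly what forces the sequence of complexes to be short exact on the left.

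Finally, with the short exact sequence of complexes of sheaves established, I would invoke the standard homological algebra: a short exact sequence of complexes induces a long exact sequence of hypercohomology groups, obtained by applying the snake lemma to the associated \v{C}ech (or injective-resolution) double complex — precisely the machinery already used to produce the long exact sequence of Corollary \ref{303}. Applied to $0 \to C_{M_1}^{\bullet} \to C_{M_2}^{\bullet} \to C_{M_3}^{\bullet} \to 0$, this yields
\[
\dots \to \mathbb{H}^i(C_{M_1}^{\bullet}) \to \mathbb{H}^i(C_{M_2}^{\bullet}) \to \mathbb{H}^i(C_{M_3}^{\bullet}) \to \mathbb{H}^{i+1}(C_{M_1}^{\bullet}) \to \dots,
\]
with the usual connecting homomorphism, which is the asserted sequence. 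No identification beyond Proposition \ref{302} is needed, and the only nonformal ingredient remains the flatness verification of the preceding paragraph.
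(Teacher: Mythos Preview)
The paper gives no proof of this corollary at all; it is stated immediately after Corollary~\ref{303} and the text then moves on to verifying condition~(4). Presumably the author regards it as immediate from the general machinery of hypercohomology, exactly the mechanism you invoke in your final paragraph.

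Your argument is therefore not so much a different route as a supplying of the missing details, and the overall shape is correct: identify $C_M^{\bullet}$ with $C^{\bullet}\otimes_{A_0}M$ functorially in $M$, tensor the given short exact sequence to obtain a sequence of complexes, check it is short exact, and then take the standard long exact sequence in hypercohomology. The one place where you go beyond what the paper justifies is the flatness step: you correctly isolate that injectivity on the left requires $\mathrm{Tor}_1^{A_0}(\mathrm{End}(\mathcal{F}),M_3)=0$, but your claim that this follows ``in general'' from $\mathcal{F}$ being flat and locally finitely presented over $A_0$ is not quite right---flatness of $\mathcal{F}$ does not by itself force flatness of $\mathrm{End}(\mathcal{F})=\mathcal{H}om(\mathcal{F},\mathcal{F})$. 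In the locally free case your argument is clean; in the general coherent case one would need an additional hypothesis or a more careful argument (and the paper does not supply one either). Since the corollary is only applied later with $M$ free of finite rank and quotients thereof, this issue does not obstruct the use made of it, but it is worth being aware that the unqualified statement for arbitrary finitely generated $M_i$ rests on a flatness assumption the paper leaves implicit.
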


Now we are ready to check the conditions on the deformation theory $D=D(A_0,M,\xi)=\mathcal{M}_{\xi}(A_0[M])$ (condition (4) in Theorem \ref{301}).

Note that the cohomology $H^i(\mathcal{X},C^j)$ commutes with localization in $A_0$. Applying the well-known five lemma to the long exact sequence in Corollary \ref{303}, the module $\mathcal{M}_{\xi}(A_0[M]) \cong \mathbb{H}^1(C^{\bullet})$ also commutes with localization. Now let $M$ be a free $A_0$-module of rank one. We use the notation $A_0[\varepsilon]:=A_0[M]$, if $M$ is a free module with rank one. By the \emph{finiteness theorem} of cohomology over algebraic spaces \cite[\S 7.5]{Ol}, the modules $H^i(\mathcal{X},C^j)$ are finitely generated for $0 \leq i,j \leq 1$. Thus $\mathcal{M}_{\xi}(A_0[\varepsilon])$ is a finitely generated module by the long exact sequence in Corollary \ref{303}.

For condition ${\rm (4b)}$, it is enough to check the case $A=A_0$ and $A'=A_0[M]$. In other words, we have to define an action $D=\mathcal{M}_{\xi}(A_0[M])$ on itself and show that this action is free. By Proposition \ref{302}, we know that
\begin{align*}
\mathcal{M}_{\xi}(A_0[M]) \cong \mathbb{H}^1(C^{\bullet}) = Z/B,
\end{align*}
where $Z$ is the set of pairs $(s_{ij}, t_i)$, where $s_{ij} \in C^0_{ij}$ and $t_i \in C^1_i$ satisfying the following conditions
\begin{enumerate}
\item $s_{ij}+s_{jk}=s_{ik}$ as elements of $C^0_{ijk}$;
\item $t_i-t_j=e(\Phi)(s_{ij})$ as elements of $C^1_{ij}$,
\end{enumerate}
and $B$ is the subset of $Z$ consisting of elements $(s_i-s_j,e(\Phi)(s_i))$, where $s_i \in C^0_i$. There is a natural action of $Z$ on itself
\begin{align*}
(s'_{ij},t'_i)(s_{ij},t_i):=(s'_{ij}+s_{ij},t'_i+t_i),
\end{align*}
where $(s'_{ij},t'_i)$, $(s_{ij},t_i) \in Z$. This action can be naturally extended to a well-defined action of $Z/B$ on itself. Thus we define an action $D=\mathcal{M}_{\xi}(A_0[M])$ on itself. It is easy to check this action is free.

Now we will discuss the condition ${\rm (4c)}$. Note that the condition of ${\rm (4c)}$ is a local property. We may assume that $\mathcal{X}$ is an algebraic space and $S$ is an affine scheme as we mentioned as the beginning of \S 3.3. To prove ${\rm (4c)}$, we need the following lemma.
\begin{lem}[Lemma 6.8, 6.9 in \cite{Art}]\label{305}
Let $\mathcal{X}$ be an algebraic space of finite type over an affine scheme $S=\Spec B$, where $B$ is an integral domain. Let $\mathcal{F}$, $\mathcal{G}$ be two coherent sheaves on $\mathcal{X}$. Then there is a non-empty open set $S'$ of $S$ such that for each $s \in S'$, the canonical map is an isomorphism
\begin{align*}
{\rm \mathcal{E}xt}_{X}^{q}(\mathcal{F},\mathcal{G})_s \xrightarrow{\cong} {\rm \mathcal{E}xt}_{X_s}^q(\mathcal{F}_s,\mathcal{G}_s), \quad q \geq 0,
\end{align*}
and
\begin{align*}
H^q(\mathcal{X},\mathcal{F})\otimes_{A_0} k(s) \xrightarrow{\cong} H^q(\mathcal{X}_s,\mathcal{F}_s), \quad q \geq 0.
\end{align*}
\end{lem}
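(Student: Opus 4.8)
The plan is to derive both isomorphisms from two standard pieces of input: Grothendieck's \emph{generic flatness} theorem and the compatibility of $\mathcal{E}xt$ and cohomology with flat base change, the flat base changes in question being the localizations $B \to B_f$ and the inclusion of the generic point $B \to K := \mathrm{Frac}(B)$. Since $B$ is a Noetherian integral domain and $\mathcal{X}$ is of finite type over $S = \Spec B$, generic flatness provides a dense open $S_1 \subseteq S$ over which $\mathcal{F}$ and $\mathcal{G}$ are flat. I will repeatedly shrink $S$ to smaller dense opens; this is harmless, as any finite intersection of dense opens of the irreducible scheme $\Spec B$ is again dense.

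For the $\mathcal{E}xt$ statement, which is \'etale-local on $\mathcal{X}$, I work on an affine \'etale chart and pick a resolution $P_\bullet \to \mathcal{F}$ by finite locally free sheaves. Over $S_1$ the flatness of $\mathcal{F}$ forces $P_\bullet$ to restrict, on each fibre $\mathcal{X}_s$, to a resolution of $\mathcal{F}_s$, so that both $\mathcal{E}xt^q_{\mathcal{X}}(\mathcal{F},\mathcal{G})$ and $\mathcal{E}xt^q_{\mathcal{X}_s}(\mathcal{F}_s,\mathcal{G}_s)$ are computed from $\mathcal{H}om(P_\bullet,\mathcal{G})$ and its fibrewise restriction. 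The natural base-change map in each degree is then a morphism of coherent sheaves which, by flat base change along $B \to K$, is an isomorphism over the generic fibre; since the non-isomorphism locus is closed and avoids the generic fibre, it is an isomorphism over a dense open, and a generic-freeness (constructibility) argument arranges a single dense open valid in all degrees $q$ at once.

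For the cohomology statement the same mechanism applies, with the local resolution replaced by a global model for $R\Gamma(\mathcal{X},\mathcal{F})$. Grothendieck vanishing bounds the cohomological dimension of $\mathcal{X}$, and the finiteness theorem for cohomology of algebraic spaces \cite[\S 7.5]{Ol} makes the groups $H^q(\mathcal{X},\mathcal{F})$ finite over $B$; organizing the computation through an \'etale hypercover produces a bounded complex $K^\bullet$ of finite $B$-modules with $H^q(K^\bullet)=H^q(\mathcal{X},\mathcal{F})$. Applying generic freeness over the domain $B$, I shrink $S$ so that every term $K^q$ and every cohomology and coboundary module of $K^\bullet$ is free; then $K^\bullet\otimes_B k(s)$ computes $H^q(\mathcal{X}_s,\mathcal{F}_s)$, and the canonical map $H^q(\mathcal{X},\mathcal{F})\otimes_B k(s)\to H^q(\mathcal{X}_s,\mathcal{F}_s)$ is an isomorphism for every $s$ in the resulting dense open $S'$.

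The main obstacle is the cohomology-and-base-change input: producing a single bounded complex $K^\bullet$ of finite $B$-modules that computes $R\Gamma(\mathcal{X},\mathcal{F})$ universally, and then using generic freeness to make all of its terms and all of its cohomology modules simultaneously free, so that base change holds on the nose rather than only up to a correction term. Because $\mathcal{X}$ is an algebraic space rather than a scheme, this requires setting up the computation through an \'etale hypercover and invoking the finiteness theorem \cite[\S 7.5]{Ol} in place of the classical proper-morphism theory; once the free model is in hand, both the finiteness of the cohomology and the fibrewise isomorphisms follow formally.
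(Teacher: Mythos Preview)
The paper does not give its own proof of this lemma: it is stated with the attribution ``Lemma 6.8, 6.9 in \cite{Art}'' and then used as a black box. There is therefore no in-paper argument to compare against; the paper simply imports the result from Artin.

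Your sketch is essentially the standard proof, and it is in the same spirit as Artin's original argument: use generic flatness to arrange that $\mathcal{F}$ and $\mathcal{G}$ are flat over a dense open of $S$, represent the relevant derived functor by a bounded complex of finite $B$-modules, and then invoke generic freeness so that formation of cohomology of this complex commutes with $-\otimes_B k(s)$ on a further dense open. One point to be careful about: you appeal to the finiteness theorem \cite[\S 7.5]{Ol} to get that $H^q(\mathcal{X},\mathcal{F})$ is a finite $B$-module, but that theorem requires a properness hypothesis which the lemma as stated (``finite type over $S$'') does not supply. In the paper's actual application the sheaves in play have proper support over $S$ (they come from the Quot functor), so finiteness is available; but as a proof of the lemma \emph{as stated} you should either add that hypothesis or note that the argument goes through once one restricts to sheaves with proper support.
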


By the above lemma, we can find a non-empty open set $S'$ such that
\begin{align*}
{\rm \mathcal{E}xt}_{\mathcal{X}}^{q}(\mathcal{F},\mathcal{G})_s \xrightarrow{\cong} {\rm \mathcal{E}xt}_{\mathcal{X}_s}^q(\mathcal{F}_s,\mathcal{G}_s),
\end{align*}
and
\begin{align*}
H^i(\mathcal{X},C^j)_s \cong H^i(\mathcal{X}_s,C_s^j), \quad i \geq 0, j=1,2,
\end{align*}
for $s \in S'$. Thus we have
\begin{align*}
\mathbb{H}^1(C^{\bullet})_s \cong \mathbb{H}^1(C^{\bullet}_s)
\end{align*}
by applying five lemma to the long exact sequence in Lemma \ref{303}, where $C^{\bullet}_s$ is the restriction of the complex $C^{\bullet}$ to the point $s$. We finish the proof of the condition ${\rm (4c)}$.

\subsubsection{Obstruction}
Fix a deformation situation $(A'\rightarrow A \rightarrow A_0,M,\xi)$, where $M$ is a free $A_0$-module of rank $n$. For any quotient $\epsilon:M \rightarrow M^*$, let $A' \rightarrow A^*$ be the quotient of $A'$ defined by $M^*$.
\begin{center}
\begin{tikzcd}
M'  \arrow[r] \arrow[d, "\epsilon"] & A' \arrow[r] \arrow[d] & A \arrow[d] \\
M^* \arrow[r] & A^* \arrow[r] & A
\end{tikzcd}
\end{center}
We can define the deformation situation $(A^* \rightarrow A \rightarrow A_0, M^*,\xi)$. For any element $(\mathcal{F}^*,\Phi^*) \in \mathcal{M}_{\xi}(A^*)$, we want to lift it to a well-defined element in $\mathcal{M}_{\xi}(A')$. We claim that the obstruction for this lifting comes from the vanishing of the second hypercohomology group $\mathbb{H}^2(C_{ {\rm ker}\epsilon }^{\bullet})$. By Corollary \ref{304}, there is a long exact sequence for the hypercohomology groups
\begin{align*}
\dots \rightarrow \mathbb{H}^1(C_{ {\rm ker}\epsilon}^{\bullet}) \rightarrow \mathbb{H}^1(C_{M}^{\bullet}) \rightarrow \mathbb{H}^1(C_{M^*}^{\bullet}) \rightarrow \mathbb{H}^2(C_{{\rm ker}\epsilon}^{\bullet}) \rightarrow \dots \quad .
\end{align*}
Such a lifting exists if and only if the morphism $\mathbb{H}^1(C_{M}^{\bullet}) \rightarrow \mathbb{H}^1(C_{M^*}^{\bullet})$ is surjective. Thus the vanishing of the second hypercohomology $\mathbb{H}^2(C_{{\rm ker}\epsilon}^{\bullet})$ is necessary and sufficient for the existence of the lifting.

For {\rm (5a)}, we have to show that there exists a lifting $(\mathcal{F}_{B'},\Phi_{B'})$ of $(\mathcal{F}_B,\Phi_B)$ with respect to the lifting $(\mathcal{F}_{A'},\Phi_{A'})$ of $(\mathcal{F}_A,\Phi_A)$, where $B'=B \times_A A'$. If we forget the morphism part $\Phi$ and only consider the coherent sheaf $\mathcal{F}$, such a lifting $\mathcal{F}_{B'}$ exists by \cite{Art,OlSt}. More generally, the map
\begin{align*}
Q(B') \rightarrow Q(A') \times_{Q(A)}  Q(B)
\end{align*}
is bijective, where $Q(A):=Q(\mathcal{G}_A/\mathcal{X}_A/S)$ and so are $Q(A')$ and $Q(B)$ \cite{OlSt}. We do the same argument for $\text{End}(\mathcal{F})$. The lifting $\text{End}(\mathcal{F}_{B'})$ of $\text{End}(\mathcal{F}_B)$ exists by the same reason. Thus the lifting $\Phi_{B'}$ of $\Phi_B$ also exists.

The proof of {\rm (5b)} is similar to \cite[(5b') page 65]{Art}. There is an inclusion map $i:\mathcal{X}_{A'_K} \rightarrow X_{A_0 \times_K A'_K}$, which induces the isomorphism
\begin{align*}
\text{Ext}^q_{A'_K}(i^* \mathcal{F}_1,\mathcal{F}_2) \cong \text{Ext}^q_{A_0 \times_K A'_K}(\mathcal{F}_1,i_* \mathcal{F}_2), \quad q \geq 0,
\end{align*}
for coherent sheaves $\mathcal{F}_1$ and $\mathcal{F}_2$. Then we have $H^q_{A'_K}(\mathcal{X},\mathcal{F}) \cong H^q_{A_0 \times_K A'_K}(\mathcal{X},\mathcal{F})$ for coherent sheaf $\mathcal{F}$. Thus we have $\mathbb{H}^2_{A'_K}(C^{\bullet}_{M_K}) \cong \mathbb{H}^2_{A_0 \times_K A'_K}(C^{\bullet}_{M})$ by applying five lemma to the long exact sequence in Corollary \ref{303}. It follows that the obstruction to lift element to $\mathcal{M}(A_0 \times_K A'_K)$ and to $\mathcal{M}(A'_K)$ are the same.

The proof of {\rm (5c)} is similar to that of {\rm (4c)}. The difference is that {\rm (4c)} is working on the deformation $\mathbb{H}^1(C^{\bullet})$, while {\rm (5c)} is working on the obstruction $\mathbb{H}^2(C^{\bullet})$. We use the same notation as in the statement of {\rm (5c)}. Let $\xi \in \mathcal{M}(A)$. Suppost that for every one-dimensional quotient $M_K \rightarrow M^*_K$, there is a non-trivial obstruction to lift $\xi_K \in \mathcal{M}(A_K)$ to $\mathcal{M}(A^*_K)$, where $A^*_K$ is the extension defined by $M_K^*$. We have to prove that there exists an open subset $S' \subseteq S$ such that $\xi$ cannot be lifted to $\mathcal{M}(A^*)$.

By the discussion at the beginning of \S 3.3.2, we know that the vanishing of the second hypercohomology $\mathbb{H}^2(C_{N}^{\bullet})$ is necessary and sufficient for the lifting of $\xi \in \mathcal{M}(A)$ in $\mathcal{M}(A^*)$, where $N$ is any submodule of $M$. Thus we have to show
\begin{align*}
\mathbb{H}^2(C_{N}^{\bullet})_s \cong \mathbb{H}^2((C_N^{\bullet})_s).
\end{align*}
This isomorphism implies that there is a non-trivial obstruction to lift $\xi_K$ to $\mathcal{M}(A^*_K)$ if and only there exists a non-trivial obstruction to lift $\xi$ to $\mathcal{M}(A^*)$. By Lemma \ref{305}, we can choose an open set $S'$ of $S$ such that
\begin{align*}
{\rm \mathcal{E}xt}^q_{\mathcal{X}_s}((\mathcal{F}_1)_s,(\mathcal{F}_2)_s) \cong {\rm \mathcal{E}xt}^q_{\mathcal{X}}(\mathcal{F}_1,\mathcal{F}_2)_s
\end{align*}
for $s \in S'$. By the spectral sequence relating local and global Ext functor, we have
\begin{align*}
{\rm Ext}^q_{\mathcal{X}_s}((\mathcal{F}_1)_s,(\mathcal{F}_2)_s) \cong {\rm Ext}^q_{\mathcal{X}}(\mathcal{F}_1,\mathcal{F}_2)_s
\end{align*}
for $s \in S'$. Taking $\mathcal{F}_1=\mathcal{O}_{\mathcal{X}}$ and $\mathcal{F}_2=C^0$, we have
\begin{align*}
H^q(\mathcal{X},C_N^0)_s \cong H^q(\mathcal{X}_s,(C_N^0)_s),
\end{align*}
for $s \in S'$. Similarly, we have $H^q(\mathcal{X},C_N^1)_s \cong H^q(\mathcal{X}_s,(C_N^1)_s)$. Thus the isomorphism $\mathbb{H}^2(C_{N}^{\bullet})_s \cong \mathbb{H}^2((C_N^{\bullet})_s)$ holds. This finishes the proof of {\rm (5c)}.

\bibliographystyle{alpha}
\nocite{*}
\bibliography{DMS}

\bigskip
\noindent\small{\textsc{Department of Mathematics, Sun Yat-Sen University}\\
135 Xingang W Rd, BinJiang Lu, Haizhu Qu, Guangzhou Shi, Guangdong Sheng, China}\\
\emph{E-mail address}:  \texttt{sunh66@mail.sysu.edu.cn}

\end{document}